\newtheorem{thm}{Theorem}[section]
\newtheorem{lem}[thm]{Lemma}
\newtheorem{prop}[thm]{Proposition}
\newtheorem{rmk}[thm]{Remark}
\newtheorem{hyp}[thm]{Hypothesis}
\newtheorem{theo}{Theorem}
\numberwithin{equation}{section}
\title{Poisson boundary of a relativistic diffusion \\ in curved space-times: an example}
\author{J\"urgen Angst \footnote{Address: IRMAR, Universit\'e Rennes 1, Campus de Beaulieu, 35042 Rennes Cedex, France, \hspace{3cm}email: \url{jurgen.angst@univ-rennes1.fr}}}
\begin{document}
\maketitle

\begin{abstract} We study in details the long-time asymptotic behavior of a relativistic diffusion taking values in the unitary tangent bundle of a curved Lorentzian manifold, namely a spatially flat and fast expanding Robertson-Walker space-time. We prove in particular that the Poisson boundary of the diffusion can be identified with the causal boundary of the underlying manifold.\end{abstract}
%
%

\section{Introduction}
Considering the importance of heat kernels in Riemannian geometry, it appears very natural to investigate the links between geometry and asymptotics of Brownian paths in a Lorentzian setting. 
Extending Dudley's seminal work \cite{dudley1}, J. Franchi and Y. Le Jan constructed in \cite{flj}, on the unitary tangent bundle $T^1 \mathcal M$ of an arbitrary Lorentz manifold $\mathcal M$, a diffusion process which is Lorentz-covariant. This process, that we will simply call \textit{relativistic diffusion} in the sequel, is the Lorentzian
analogue of the classical Brownian motion on a Riemannian manifold. It can be seen either as a random perturbation of the timelike geodesic flow on the unitary tangent bundle, or as a stochastic development of Dudley's diffusion in a fixed tangent space. 
\par 
\medskip
In Minkowski space-time, and more generally in Lorentz manifolds of constant curvature, the long-time asymptotics of the relativistic diffusion is well understood, see \cite{dudley1,dudley3, flj,ismael,camille}. But, as in the Riemannian case, there is no hope to fully determinate the asymptotic behavior of the relativistic diffusion on an arbitrary Lorentzian manifold: it could depend heavily on the base space, see e.g. \cite{marcanton} and its references in the ``simple'' case of Cartan--Hadamard manifolds.
Recently in \cite{angstannIHP}, we studied in details the long-time asymptotic behavior of the relativistic diffusion in the case when the underlying space-time belong to a large class of curved Lorentz manifolds: Robertson-Walker space-times, or RW space-times for short, whose definition is recalled in Sect. \ref{sec.RW} below. We show in particular that the relativistic diffusion's paths converge almost surely to random points of a natural geometric compactification of the base manifold $\mathcal M$, namely its causal boundary $\partial \mathcal M_c^+$ introduced in the reference \cite{geroch}.

\newpage
\begin{theo}[Theorem 3.1 in \cite{angstannIHP}] 
Let $\mathcal M$ be a RW space-time,  $(\xi_0, \dot{\xi}_0) \in T^1 \mathcal M$ and let $(\xi_s, \dot{\xi}_s)_{0 \leq s \leq \tau}$ be the relativistic diffusion starting from $(\xi_0, \dot{\xi_0})$. Then, almost surely as $s$ goes to the explosion time $\tau$ of the diffusion, the first projection $\xi_s$ converges to a random point $\xi_{\infty}$ of the causal boundary $\partial \mathcal M_c^+$ of $\mathcal M$.
\end{theo}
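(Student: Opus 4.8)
The plan is to work in the canonical chart of the spatially flat RW space-time $\mathcal M=\bigl(I\times\mathbb R^3,\,-dt^2+a(t)^2\lvert dx\rvert^2\bigr)$, using the explicit system of stochastic differential equations satisfied there by the relativistic diffusion (recalled in Section~\ref{sec.RW}, after \cite{flj}). Write $(\xi_s,\dot\xi_s)=(t_s,x_s,\dot t_s,\dot x_s)$; membership in $T^1\mathcal M$ forces $\dot t_s^{\,2}=1+a(t_s)^2\lvert\dot x_s\rvert^2\ge1$, so $t_s$ is increasing, and the spatial isometries (translations, rotations) let one reduce the velocity part to a scalar ``energy'' process (the direction of $\dot x_s$ being carried by an auxiliary spherical diffusion, irrelevant here) and the position to the primitive $x_s=x_0+\int_0^s\dot x_u\,du$. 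I will use the fast-expansion hypothesis through two of its consequences: the conformal time is finite, $\eta_\infty:=\eta(t_0)+\int_{t_0}^{\infty}\tfrac{dt}{a(t)}<\infty$, so that $(t,x)\mapsto(\eta(t),x)$ identifies $\mathcal M$ with a bounded Minkowskian strip $\{(\eta,x):\eta(t_0)\le\eta<\eta_\infty\}$, whose future causal boundary --- matched in Section~\ref{sec.RW} with $\partial\mathcal M_c^+$ in the sense of \cite{geroch} --- contains the spacelike slice $\{\eta=\eta_\infty\}\times\mathbb R^3$; and the reduced velocity equation, once read in the coordinate time $t$, has a drift whose leading part $\propto -a'(t)/a(t)$ is non-positive, the rest being bounded, and a bounded diffusion coefficient.

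First I would record the a priori estimate coming from the pseudo-norm relation: $a(t_s)^2\lvert\dot x_s\rvert^2=\dot t_s^{\,2}-1\le\dot t_s^{\,2}$ gives $\lvert\dot x_s\rvert\le \dot t_s/a(t_s)=\tfrac{d}{ds}\eta(t_s)$, whence $\int_0^{s}\lvert\dot x_u\rvert\,du\le \eta(t_s)-\eta(t_0)$ for all $s<\tau$. This has two payoffs. On the one hand, $\int_0^{\tau}\lvert\dot x_u\rvert\,du\le \eta_\infty-\eta(t_0)<\infty$, so $s\mapsto x_s$ has finite total variation on $[0,\tau)$ and converges, as $s\to\tau$, to a random $x_\infty\in\mathbb R^3$ with $\lvert x_\infty-x_0\rvert\le\eta_\infty-\eta(t_0)$; in particular the spatial component never runs off to infinity. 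On the other hand, on the event $\{\sup_{s<\tau}t_s<\infty\}$ the right-hand side is bounded (a finite integral of $1/a$ over a bounded $t$-interval), so the position stays in a fixed ball there; I will need this in the next step.

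The crux is then to show that $t_s\to+\infty$ as $s\to\tau$; this is what makes the limit land on the causal boundary rather than inside $\mathcal M$. Since $dt_s=\dot t_s\,ds$ with $\dot t_s\ge1$, the coordinate time is a legitimate new clock, and by the time-substitution rules for SDEs the energy process read in $t$ solves a one-dimensional equation whose drift is bounded above and whose diffusion coefficient is bounded; hence, by a Khasminskii-type test, it does not explode and is defined for all $t\in[t_0,\infty)$. Suppose, for contradiction, that $t_\infty^-:=\lim_{s\to\tau}t_s<\infty$. Identifying the time-changed diffusion on $[t_0,t_\infty^-)$ with that non-exploding solution forces the energy to stay bounded as $s\to\tau$; together with the position bound of the previous paragraph, this keeps $(\xi_s,\dot\xi_s)$ in a compact subset of $T^1\mathcal M$ for all $s<\tau$, which is impossible at the explosion time, so $\tau=\infty$; but then $t_s\ge t_0+s\to\infty$, contradicting $t_\infty^-<\infty$. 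Therefore $t_s\to+\infty$, and consequently $\eta(t_s)=\eta(t_0)+\int_{t_0}^{t_s}\tfrac{dt}{a(t)}\to\eta_\infty$.

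It remains to conclude. In the conformal chart the pair $(\eta_\infty,x_\infty)$ is a genuine boundary point of the strip, hence, through the identification of Section~\ref{sec.RW}, a (random, since $x_\infty$ is) point $\xi_\infty\in\partial\mathcal M_c^+$; and the two convergences $\eta(t_s)\to\eta_\infty$ and $x_s\to x_\infty$ are, by the very definition of the topology on the causal completion $\overline{\mathcal M}=\mathcal M\cup\partial\mathcal M_c^+$, exactly the statement that $\xi_s\to\xi_\infty$ in $\overline{\mathcal M}$ as $s\to\tau$. The hard part of this scheme is the step $t_s\to+\infty$: one must descend to the reduced equation and choose the right clock to turn it into a non-exploding one-dimensional diffusion; everything else follows softly from the unit-norm constraint and from the finiteness of the conformal time, which is precisely where ``fast'' expansion enters.
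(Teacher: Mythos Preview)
The statement you address is quoted from \cite{angstannIHP} and not proved in full generality in this paper; the paper only re-proves the special case of a spatially flat RW space-time with exponential expansion (Theorem~\ref{the.asymp}, and more precisely Proposition~\ref{pro.spatial} for the convergence of $x_s$). Your proposal likewise restricts to this case, so the comparison is fair.

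Your core step --- bounding $\int_0^s|\dot x_u|\,du$ by $\int_{t_0}^{t_s}\frac{du}{\alpha(u)}$ via the pseudo-norm relation and concluding that $x_s$ has finite total variation because $1/\alpha$ is integrable at infinity --- is exactly the paper's argument in Proposition~\ref{pro.spatial}. On this point you and the paper agree verbatim.

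Where you diverge is in establishing $t_s\to+\infty$. The paper's route is much shorter: Remark~\ref{rem.pseudo} gives $\dot t_s\ge 1$, hence $t_s\ge t_0+s$, and Proposition~\ref{pro.exiuni} asserts non-explosion ($\tau=+\infty$) directly; the two together give $t_s\to+\infty$ immediately. Your contradiction argument via a time-change to coordinate time and a Khasminskii test is not wrong in spirit, but it is an unnecessary detour, and it contains an inaccuracy: after the time-change $dt=\dot t_s\,ds$, the diffusion coefficient of the energy process becomes $\sigma\sqrt{(\dot t^2-1)/\dot t}\sim\sigma\sqrt{\dot t}$, which is \emph{not} bounded as you claim. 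Non-explosion still holds (the drift $-H(t)(\dot t^2-1)/\dot t$ is sufficiently confining), but your stated justification does not cover it. In short: your convergence argument is the paper's, your non-explosion argument is more elaborate than needed and slightly misstated, and the simpler observation $\dot t_s\ge 1$ already does the job once global existence is known.
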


The purpose of this paper is to push the analysis further by showing that, in the case of a RW space-time  with exponential growth, the Poisson boundary of the diffusion is precisely generated by the single random variable $\xi_{\infty}$ of the causal boundary $\partial \mathcal M_c^+$, which in that case can be identified with a spacelike copy of the Euclidean space $\mathbb R^3$ (see \cite{angstannIHP} and Theorem 4.3 of \cite{flores}). Namely, we have:

\begin{theo}[Theorems \ref{the.asymp} and \ref{the.poisson} below]
Let $\mathcal M:=(0, +\infty) \times_{\alpha} \mathbb R^3$ be a RW space-time where $\alpha$ has exponential growth. Let  $(\xi_s, \dot{\xi}_s)_{s \geq 0}=(t_s,x_s, \dot{t}_s, \dot{x}_s)_{s \geq 0}$ be the relativistic diffusion starting from $(\xi_0, \dot{\xi}_0) \in T^1 \mathcal M$. Then, almost surely as $s$ goes to infinity, the process $x_s$ converges to a random point $x_{\infty}$ in $\mathbb R^3$, and the invariant sigma field of the whole diffusion $(\xi_s, \dot{\xi}_s)_{s \geq 0}$ coincides almost surely with $\sigma(x_{\infty})$.
\end{theo}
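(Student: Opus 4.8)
The plan is to prove the two halves separately: the convergence half (Theorem~\ref{the.asymp}) is a quantitative sharpening, in the exponential-growth regime, of the cited Theorem~3.1 of \cite{angstannIHP}, while the boundary identification (Theorem~\ref{the.poisson}) is the substantive part. \emph{Step 1 (asymptotics).} I would start from the explicit system of stochastic differential equations for $(t_s,x_s,\dot{t}_s,\dot{x}_s)$ in the Robertson--Walker chart: $dt_s=\dot{t}_s\,ds$, $dx_s=\dot{x}_s\,ds$, together with the velocity equations, in which the geometric contribution to the drift of $\dot{t}_s$ is $-\tfrac{\alpha'(t_s)}{\alpha(t_s)}\bigl(\dot{t}_s^2-1\bigr)$ (the remainder of that drift being linear in $\dot{t}_s$) and the quadratic variation of $\dot{t}_s$ is a constant multiple of $(\dot{t}_s^2-1)\,ds$. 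Since $\dot{t}_s\ge 1$ we get $t_s\ge t_0+s\to\infty$; the strongly mean-reverting term $-\tfrac{\alpha'}{\alpha}\dot{t}_s^2$ (with $\alpha'/\alpha$ bounded below by some $c>0$ for large $t$, as $\alpha$ has exponential growth) yields, through a Lyapunov-type non-explosion criterion, that $\tau=+\infty$, and, through the associated comparison on $s\mapsto\mathbb E[\dot{t}_s]$, that $\sup_s\mathbb E[\dot{t}_s]<\infty$. By the unit pseudo-norm constraint $|\dot{x}_s|=\sqrt{\dot{t}_s^2-1}\,/\,\alpha(t_s)\le \dot{t}_s/\alpha(t_0+s)$, so $\mathbb E\!\int_0^\infty|\dot{x}_s|\,ds<\infty$ by exponential growth of $\alpha$; hence $\int_0^\infty|\dot{x}_s|\,ds<\infty$ almost surely and $x_s=x_0+\int_0^s\dot{x}_u\,du$ converges almost surely to a random $x_\infty\in\mathbb R^3$. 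I would also record for later use that the ``internal'' process $(\dot{t}_s,\omega_s)$, with $\omega_s:=\dot{x}_s/|\dot{x}_s|\in S^2$, is — up to the time change carried by $t_s$, and using $\alpha'/\alpha\to c$ — an \emph{ergodic} diffusion: its radial part $\operatorname{arccosh}(\dot{t}_s)$ is positive recurrent near $0$, so the angular clock diverges and $\omega_s$ does not converge but equidistributes on $S^2$.

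\emph{Step 2 (easy inclusion and reduction).} Since $x_\infty=\lim_s x_s$ is a shift-invariant measurable functional of the path, $\sigma(x_\infty)$ is contained (mod null sets) in the invariant $\sigma$-field $\mathcal I$, which for a Markov process coincides mod null sets with the tail $\sigma$-field. For the reverse inclusion, recall the standard correspondence: a bounded $\mathcal I$-measurable $H$ can be written $H=\lim_s h(\xi_s,\dot{\xi}_s)$ where $h(z):=\mathbb E_z[H]$ is a bounded $\mathcal L$-harmonic function on $T^1\mathcal M$ (smooth, by hypoellipticity of $\mathcal L$) and $s\mapsto h(\xi_s,\dot{\xi}_s)$ is a bounded martingale. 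It thus suffices to show that $h(\xi_S,\dot{\xi}_S)$ becomes, as $S\to\infty$, asymptotically a \emph{deterministic function of $x_S$ alone}; combined with $x_S\to x_\infty$ this forces $H=\varphi(x_\infty)$ for a fixed bounded $\varphi\colon\mathbb R^3\to\mathbb R$, i.e. $\mathcal I\subseteq\sigma(x_\infty)$.

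\emph{Step 3 ($h(\xi_S,\dot{\xi}_S)$ forgets everything but $x_S$).} I would establish this by a coupling: from time $S$ on, run a second copy driven by the same Brownian motion, started at time $S$ from the same base point $x_S$ and label $t_S$ but with its velocity resampled independently from the (asymptotic) stationary law of $(\dot{t}_\cdot,\omega_\cdot)$. Three facts then make the two copies merge for large $S$: (a) exponential growth of $\alpha$ forces $|\dot{x}_u|=O(e^{-cu})$ for both copies, so all future displacements stay within $O(e^{-cS})$ of $x_S$ and the two position limits agree up to $o_S(1)$; (b) the velocity diffusion — of hyperbolic type on the mass shell, with the confining drift $-\tfrac{\alpha'}{\alpha}(\dot{t}^2-1)$ — is exponentially mixing, so under a synchronous (or reflection) coupling of the noises the two velocity components become asymptotically equal; (c) since $\alpha'/\alpha\to c$, the residual $t_S$-dependence of the velocity dynamics washes out. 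Feeding the merging of the two $T^1\mathcal M$-valued paths into the (uniform, via an interior gradient estimate for $\mathcal L$-harmonic functions) continuity of $h$ shows $h(\xi_S,\dot{\xi}_S)$ is asymptotically insensitive to $\dot{\xi}_S$ and to $t_S$, hence asymptotically a function of $x_S$; mutual absolute continuity of the $\mathbb P_z$ on each $\mathcal F_t$ (Girsanov) then makes this function the same for all starting points, so that $H$ is a genuine function of $x_\infty$ and $\mathcal I=\sigma(x_\infty)$.

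\emph{Main obstacle.} The crux is Step~3(b)--(c): one must prove a quantitative, ideally exponential, contraction/mixing estimate for the velocity diffusion — a diffusion on the hyperbolic mass shell perturbed by the slowly varying confining term $-\tfrac{\alpha'(t_s)}{\alpha(t_s)}(\dot{t}_s^2-1)$ — uniformly enough in the parameter $\alpha'/\alpha$, and then reconcile the coupling with the exponentially small residual spatial displacement so that the two position limits genuinely coincide. Most of the effort will go into constructing and controlling the limiting stationary regime of $(\dot{t}_s,\omega_s)$ (a suitable Lyapunov function, uniqueness, recurrence of the radial part); the rest is bookkeeping on top of the Step~1 asymptotics.
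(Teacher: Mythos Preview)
Your Step~1 is sound, though the paper obtains the a.s.\ convergence of $x_s$ purely pathwise, with no moment bound on $\dot t_s$: from $|\dot x_u|\le \dot t_u/\alpha(t_u)$ and the substitution $v=t_u$ one gets $\int_0^\infty |\dot x_u|\,du \le \int_{t_0}^\infty \alpha(v)^{-1}\,dv<\infty$ directly.

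The genuine gap is Step~3. The velocity process $(\dot t_s,\omega_s)$ is \emph{not} autonomous: its drift contains $H(t_s)=\alpha'(t_s)/\alpha(t_s)$, so two copies started from the same $(t_S,x_S)$ but different velocities, driven by synchronous or reflected noise, carry different $t$-trajectories and hence different drifts throughout. Even if one forces the velocities to meet at some time $T$, the offset $t^1_T-t^2_T=\int_S^T(\dot t^1_u-\dot t^2_u)\,du$ is a nonzero random constant; after $T$ the drifts still differ (unless $H$ is exactly constant) and the velocities separate again. Thus the two $T^1\mathcal M$-valued paths never merge, and your appeal to continuity of $h$ is empty: an interior gradient estimate only controls $|h(\xi^1_s)-h(\xi^2_s)|$ by the distance between the two states, and that distance does not go to zero because the $t$-components stay a fixed positive distance apart. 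Your point~(c) says the drifts become \emph{close}, not equal; it does not eliminate the $t$-dependence of $h$. The final Girsanov remark is also off: absolute continuity on each $\mathcal F_t$ says nothing about invariant events. The paper proceeds differently. First it proves a Liouville theorem for the \emph{temporal subdiffusion} $(t_s,\dot t_s)$ via a \emph{shift}-coupling (two independent copies are shown to intersect, at different random proper times $T_1,T_2$), then extends it to the spherical subdiffusion $(t_s,\dot t_s,\Theta_s)$ by a subsequent mirror coupling on $\mathbb S^2$. The passage to the full diffusion is \emph{not} by coupling but by an equivariance/conditioning (``d\'evissage'') argument: writing the generator as $\mathcal G=\mathcal L_E+F(e)\,\partial_x$ with $e=(t,\dot t,\Theta)$, for a bounded invariant $Z$ one sets $Z^y(\omega):=Z(\omega^1,\omega^2-x_\infty(\omega)+y)$; translation equivariance in $x$ makes $(e,x)\mapsto\mathbb E_{(e,x)}[Z^y]$ independent of $x$, hence $\mathcal L_E$-harmonic in $e$, hence constant $=:\psi(y)$ by the subdiffusion Liouville theorem; mollifying in $y$, substituting $y=x_\infty$, and using hypoellipticity only to pass to the limit gives $h(e,x)=\mathbb E_{(e,x)}[\psi(x_\infty)]$.
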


The above result is the first computation of the Poisson boundary of the relativistic diffusion in the case of a Lorentz manifold with non-constant curvature. It can be seen as a complementary result of those of \cite{ismael, camille} in the flat cases. The difficulty here lies in the following facts: classical coupling techniques are hardly implemented in hypoelliptic settings, classical Lie group methods or explicit conditionning (Doob transform) do not apply in the presence of curvature. Our approach is purely probabilistic, it is first based on the existence of natural subdiffusions due to symmetries of the base manifold, namely the processes 
$(t_s,\dot{t}_s)_{s \geq 0}$ and $(t_s, \dot{t}_s, \dot{x}_s/|\dot{x}_s|)_{s \geq 0}$ are subdiffusions of the whole process. Using successive shift-couplings, we then show that the Poisson boundaries of these two subdiffusions are trivial. Finally, we conclude via an abstract conditionning argument, allowing us to show that 
the invariant sigma field of the whole diffusion is indeed generated by the invariant sigma field of the subdiffusion $(t_s, \dot{t}_s, \dot{x}_s/|\dot{x}_s|)_{s \geq 0}$
and the single extra information $\sigma(x_{\infty})$.

\par
\medskip
This last argument is new and non-trivial, it takes into account the hypoellipticity of  the infinitesimal generator of the relativistic diffusion and it's equivariance with respect to Euclidean spatial translations of the base space. It is actually the starting point of the very recent work \cite{AT1} in collaboration with C. Tardif, where our main motivation is to exhibit a general setting and some natural conditions that allow to compute the Poisson boundary of a diffusion starting from the Poisson boundary of a subdiffusion. It is interesting to note that the resulting devissage method can be used to recover Bailleul's result \cite{ismael} in a more direct way, see Section 4.2 of \cite{AT1}.

\par
\medskip
The article is organized as follows. In the first section, we briefly recall the geometrical background on RW space-times and the definition of the relativistic diffusion in this setting. In Section \ref{sec.results}, we then state our results concerning the asymptotic behavior of the relativistic diffusion and its Poisson boundary. The last section is dedicated to the proofs of these results. For the sake of self-containedness and in order to provide an easily readable article, some results of \cite{angstannIHP} and their proofs are recalled here.

\newpage
\section{Geometric and probabilistic backgrounds}
\label{sec.RW}\label{sec.background}
The Lorentz manifolds we consider here are RW space-times. They are the natural geometric framework to formulate the theory of Big-Bang in General Relativity theory.
The constraint that a space-time satisfies both Einstein's equations and the cosmological principle implies it has a warped product structure, see e.g. \cite{Weinberg} p. 395--404. A RW space-time, classically denoted by $\mathcal M:=I \times_{\alpha} M$, is thus defined as a Cartesian product of a open real interval $(I,-dt^2)$ (the base) and a 3-dimensional Riemannian manifold $(M, h)$ of constant curvature (the fiber), endowed with a Lorentz metric of the following form $g := -dt^2 + \alpha^2(t)h,$
where $\alpha$ is a positive function on $I$, called the \textit{expansion function}. Classical examples of RW space-times are the (half)$-$Minkowski space-time, Einstein static universe, de Sitter and anti-de Sitter space-times etc.
\par
\medskip
A detailed study of the relativistic diffusion in a general RW space-time has been led in \cite{angstannIHP} where we characterized the almost-sure long-time behavior of the diffusion. We focus here on the case when the real interval $I$ is unbounded and the Riemannian fiber is Euclidean, see Remark \ref{rem.courb} below. 
Namely, we consider RW space-times $\mathcal M=(0, +\infty) \times_{\alpha} \mathbb R^3$, where the expansion function $\alpha$ satisfies the following 
hypotheses: 

\begin{hyp}\noindent
\begin{enumerate}
\item The function $\alpha$ is of class $C^2$ on $(0, +\infty)$ and it is increasing and $\mathrm{log}-$concave, \emph{i.e.} the Hubble function $H:=\alpha'/\alpha$ is non-negative and non-increasing.
\item The function $\alpha$ has exponential growth, \emph{i.e.} the limit $H_{\infty}:=\displaystyle{\lim_{t \to + \infty} H(t)}$ is positive.
\end{enumerate}
\end{hyp}

\begin{rmk}\label{rem.courb}
The hypothesis of log-concavity of the expansion function is classical, it appears natural from both physical \cite{hawell} and mathematical points of view \cite{alias}. Note also that we are working in dimension $3+1$ because physically relevent space-times have dimension 4, but our results apply verbatim in dimension $d+1$ if $d \geq 3$. Finally, as noticed in Remark 3.5 of \cite{angstannIHP}, in a RW space-time $\mathcal M:=I \times_{\alpha} M$, if the expansion is exponential (the inverse of $\alpha$ is thus integrable at infinity), whatever the curvature of the Riemannian manifold $M$, the process $\dot{x}_s/|\dot{x}_s|$ asymptotically describes a recurrent time-changed spherical Brownian motion in the limit unitary tangent space, i.e. it does not ``see" the curvature $M$, that is why we concentrate here on the case $M=\mathbb R^3$. 
\end{rmk}

\begin{rmk}
A RW space-time  $\mathcal M=(0, +\infty) \times_{\alpha} \mathbb R^3$ is naturally endowed with a global chart $\xi=(t,x)$ where $x=(x^1, x^2,x^3)$ are the canonical coordinates in $\mathbb R^3$. At a point $(t,x)$, the scalar curvature is given by $R= -6 ( \alpha''(t)/\alpha(t)+ {\alpha'}^2(t)/\alpha^2(t))$. In particular, although spatially flat, such a space-time is not globally flat in general. In the case of a ``true'' exponential expansion, i.e. when $\alpha(t)=\exp(H \times t)$ for a positive constant $H$, the space-time $\mathcal M=(0, +\infty) \times_{\alpha} \mathbb R^3$ is an Einstein manifold: its Ricci tensor is proportional to its metric.  
\end{rmk}

On a general Lorentzian manifold $\mathcal M$, the sample paths $(\xi_s, \dot{\xi}_s)$ of the relativistic diffusion introduced in \cite{flj} are time-like curves that are future-directed and parametrized by the arc length $s$ so that the diffusion actually lives on the positive part of the unitary tangent bundle of the manifold, that we denote by $T^1_+ \mathcal M$.
The infinitesimal generator of the diffusion is the following hypoelliptic operator 
\[
\mathcal L:= \mathcal L_0 + \frac{\sigma^2}{2 } \Delta_{\mathcal V},
\]
where $\mathcal L_0$ generates the geodesic flow on $T^1 \mathcal M$, $\Delta_{\mathcal V}$ is the vertical Laplacian, and $\sigma$ is a real parameter. Equivalently, if $\xi^{\mu}$ is a local chart on $\mathcal M$ and if $\Gamma_{\nu \rho}^{\mu}$ are the usual Christoffel symbols, the relativistic diffusion is the solution of the following system of stochastic differential equations, for $0 \leq \mu \leq d=\mathrm{dim} (\mathcal M)$:
\begin{equation}\label{eqn.flj}
\left \lbrace \begin{array}{l}
\displaystyle{ d \xi^{\mu}_s = \dot{\xi}_s^{\mu} ds}, \\
\displaystyle{ d\xi^{\mu}_s= -\Gamma_{\nu \rho}^{\mu}(\xi_s)\, \xi^{\nu}_s \xi^{\rho}_s ds + d \times \frac{\sigma^2}{2}\, \xi^{\mu}_s ds+ \sigma  d M^{\mu}_s}, 
\end{array}\right.
\end{equation}
where the brakets of the martingales $M^{\mu}_s$ are given by
\[
\langle dM_s^{\mu}, \;  dM_s^{\nu}\rangle = (\xi^{\mu}_s \xi^{\nu}_s +g^{\mu \nu}(\xi_s))ds.
\]

In the case of a RW space-time of the form $\mathcal M=(0, +\infty) \times_{\alpha} \mathbb R^3$ endowed with its natural global chart, the metric is $g_{\mu \nu} = \mathrm{diag}(-1, \alpha^2(t), \alpha^2(t), \alpha^2(t))$, and the only non vanishing Christoffel symbols are $\Gamma_{i\,i}^0 = \alpha(t) \alpha'(t)$, and $\Gamma_{0\, i}^i = H(t)$ for $i=1, 2, 3$. 
Thus, in the case of a spatially flat RW space-time, the system of stochastic differential equations (\ref{eqn.flj}) that defines the relativistic diffusion simply reads: 
\begin{equation}\label{eqn.flj.eucli}
\left \lbrace \begin{array}{ll}
\displaystyle{d t_s=\dot{t}_s ds}, & \quad \displaystyle{d \dot{t}_s = - \alpha(t_s) \: \alpha'(t_s) |\dot{x}_s|^2 ds + \frac{3 \sigma^2}{2} \dot{t}_s ds + d M^{\dot{t}}_s,} \\
\displaystyle{d x^i_s = \dot{x}_s^i ds}, &  \quad \displaystyle{d \dot{x}^i_s = \left(-2 H(t_s) \dot{t}_s + \frac{3 \sigma^2}{2}\right)  \dot{x}^i_s\, ds + dM^{\dot{x}^i}_s},
\end{array}\right. 
\end{equation}
where $|\dot{x}_s|$ denote the usual Euclidean norm of $\dot{x}_s$ in $\mathbb R^3$ and the brackets satisfy
\[
\left \lbrace \begin{array}{l}
\displaystyle{d \langle M^{\dot{t}}, \, M^{\dot{t}} \rangle_s = \sigma^2 \left( \dot{t}_s^2-1 \right) ds, \quad d \langle M^{\dot{t}}, \, M^{\dot{x}^i} \rangle_s = \sigma^2 \, \dot{t}_s \dot{x}^i_s ds,} \\
\displaystyle{d \langle M^{\dot{x}^i}, \, M^{\dot{x}^j} \rangle_s = \sigma^2 \left(\dot{x}^i_s \dot{x}^j_s + \frac{\delta_{ij}}{\alpha^2(t_s)}  \right) ds}.
\end{array}\right.
\]
Moreover, the parameter $s$ being the arc length, we have the pseudo-norm relation:
\begin{equation} \label{eqn.pseudo.eucli}\dot{t}^2_s -1 = \alpha^2(t_s) \times |\dot{x}_s|^2.\end{equation}

\begin{rmk}\label{rem.pseudo}
The sample paths being future-directed, from the above pseudo-norm relation, we have obviously $\dot{t}_s \geq 1$, in particular as long as it is well defined, the ``time" process $t_s$ is a strictly increasing and we have $t_s>s$. 
\end{rmk}

\section{Statement of the results}\label{sec.results}
We can now state our results concerning the asymptotic behavior of the relativistic diffusion and its Poisson boundary in a spatially flat and fast expanding RW space-time. 
For the sake of clarity, the proofs of these different results are postponed in Section \ref{sec.proofs}. For the whole section, let us thus fix a spatially flat RW space-time $\mathcal M=(0, +\infty) \times_{\alpha} \mathbb R^3$, where $\alpha$ satisfies the hypotheses stated in Sect. \ref{sec.RW}.

\subsection{Existence, uniqueness, reduction of the dimension}
Naturally, the first thing to do is to ensure that the system of stochastic differential equations (\ref{eqn.flj.eucli}) admits a solution, and possibly to exhibit lower dimensional subdiffusions that will facilitate its study. This is the object of the following proposition.
 
\begin{prop}\label{pro.exiuni}
For any $(\xi_0, \dot{\xi}_0)=(t_0, x_0, \dot{t}_0, \dot{x}_0) \in T^1_+ \mathcal M$, the system of stochastic differential equations (\ref{eqn.flj.eucli})
admits a unique strong solution $(\xi_s, \dot{\xi}_s)=(t_s, x_s, \dot{t}_s, \dot{x}_s)$ starting from $(\xi_0, \dot{\xi}_0)$, which is well defined for all positive proper times $s$. Moreover, this solution admits the two following subdiffusions of dimension two and four respectively: $(t_s, \dot{t}_s)_{s \geq 0}$ and $(t_s, \dot{t}_s, \dot{x}_s/|\dot{x}_s| )_{s \geq 0}$. 
\end{prop}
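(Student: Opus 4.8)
The plan is to establish the three assertions of Proposition~\ref{pro.exiuni} in turn: local existence and uniqueness, non-explosion (completeness for all proper times $s$), and the identification of the two subdiffusions. Since the drift and diffusion coefficients in the system~(\ref{eqn.flj.eucli}) are built from $\alpha$, $\alpha'$ and $H=\alpha'/\alpha$, which are $C^1$ (indeed $\alpha$ is $C^2$) on the open set $\{t>0\}$, the coefficients are locally Lipschitz there; standard SDE theory then yields a unique strong solution up to an explosion time $\tau$. The only subtlety for local well-posedness is that the natural state space is the quadric $T^1_+\mathcal M=\{\dot t^2-1=\alpha^2(t)|\dot x|^2,\ \dot t\geq 1\}$ rather than all of $\mathbb R^{8}$; here I would check via It\^o's formula that the pseudo-norm relation~(\ref{eqn.pseudo.eucli}) is preserved by the dynamics. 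Concretely, set $N_s:=\dot t_s^2-1-\alpha^2(t_s)|\dot x_s|^2$ and compute $dN_s$ using the SDEs and the bracket table: the martingale part vanishes identically (the brackets were designed so that $\langle dM^{\dot t}-\alpha\dot x\cdot\text{(rotated terms)}\rangle$ telescopes) and the finite-variation part is a multiple of $N_s$ itself, so $N_0=0$ forces $N_s\equiv 0$. Thus the solution stays on $T^1_+\mathcal M$ and, by Remark~\ref{rem.pseudo}, $\dot t_s\geq 1$ and $t_s$ is strictly increasing with $t_s>s$.

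Next, non-explosion. Explosion could a priori occur either because $t_s$ reaches the left boundary $0$ in finite time --- impossible since $t_s$ is increasing and $t_s\geq t_0>0$ --- or because some coordinate blows up in finite proper time. The time coordinate satisfies $t_s\leq t_0+\int_0^s\dot t_u\,du$, so it suffices to control $\dot t_s$. Here I would exploit log-concavity: since $H$ is non-increasing and non-negative, $\alpha'(t)=H(t)\alpha(t)\leq H(t_0)\alpha(t)$, and from the first SDE, $d\dot t_s=-\alpha\alpha'|\dot x|^2\,ds+\tfrac{3\sigma^2}{2}\dot t_s\,ds+dM^{\dot t}_s$, where the geodesic drift term $-\alpha\alpha'|\dot x|^2=-H(t_s)(\dot t_s^2-1)\leq 0$ by~(\ref{eqn.pseudo.eucli}) is a friction term pushing $\dot t_s$ downward. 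Hence $\dot t_s$ is dominated by the solution of the linear SDE $d\bar t_s=\tfrac{3\sigma^2}{2}\bar t_s\,ds+d\bar M_s$ with $d\langle\bar M\rangle_s=\sigma^2(\bar t_s^2-1)\,ds$; a Gronwall/comparison argument on $\mathbb E[\dot t_s^2]$ (or on $\mathbb E[\dot t_s]$, noting $d\langle M^{\dot t}\rangle\le\sigma^2\dot t_s^2\,ds$ and applying It\^o to $\dot t_s^2$) gives $\mathbb E[\dot t_s^2]\leq C e^{Cs}$ for a deterministic constant, which precludes explosion of $\dot t_s$, hence of $t_s$. With $t_s$ staying in a compact subinterval of $(0,\infty)$ on $[0,s]$ for each finite $s$, the coefficients governing $\dot x^i_s$ are bounded on that time interval (again using $H\leq H(t_0)$ and $\alpha$ bounded below), so $x_s$ and $\dot x_s$ cannot explode either; thus $\tau=+\infty$ almost surely.

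Finally, the subdiffusion structure. The point is that the closed system formed by $(t_s,\dot t_s)$ --- namely $dt_s=\dot t_s\,ds$ together with $d\dot t_s=-\alpha(t_s)\alpha'(t_s)|\dot x_s|^2\,ds+\tfrac{3\sigma^2}{2}\dot t_s\,ds+dM^{\dot t}_s$ --- does not autonomously close unless one eliminates $|\dot x_s|^2$; but the pseudo-norm relation~(\ref{eqn.pseudo.eucli}) does exactly that, giving $|\dot x_s|^2=(\dot t_s^2-1)/\alpha^2(t_s)$, so that $d\dot t_s=-H(t_s)(\dot t_s^2-1)\,ds+\tfrac{3\sigma^2}{2}\dot t_s\,ds+dM^{\dot t}_s$ with $d\langle M^{\dot t}\rangle_s=\sigma^2(\dot t_s^2-1)\,ds$; this is a genuine two-dimensional autonomous diffusion in $(t_s,\dot t_s)$. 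For the four-dimensional subdiffusion, write $\dot x_s=|\dot x_s|\,u_s$ with $u_s:=\dot x_s/|\dot x_s|\in\mathbb S^2$; an It\^o computation (decomposing $dM^{\dot x}_s$ into its radial and spherical parts using the bracket $d\langle M^{\dot x^i},M^{\dot x^j}\rangle_s=\sigma^2(\dot x^i_s\dot x^j_s+\delta_{ij}/\alpha^2(t_s))\,ds$) shows that $|\dot x_s|$ is itself a functional of $(t_s,\dot t_s)$ via~(\ref{eqn.pseudo.eucli}), and that $du_s$ involves only $t_s,\dot t_s,u_s$ and an independent spherical Brownian motion; hence $(t_s,\dot t_s,u_s)$ is closed. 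I expect the main obstacle to be the bookkeeping in this last It\^o computation: one must carefully project the vector-valued martingale onto the sphere, verify that the resulting spherical generator has no residual dependence on $x_s$ or on the phase information lost in passing to $u_s$, and check that the ``time-change'' relating $u_s$ to a standard spherical Brownian motion is measurable with respect to $(t_s,\dot t_s)$ --- this is precisely the computation underlying Remark~\ref{rem.courb} and Remark 3.5 of~\cite{angstannIHP}, and it is where equivariance under spatial rotations and translations is used to guarantee that no further coordinates enter.
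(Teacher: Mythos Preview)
Your proposal is correct and follows essentially the same route as the paper: smooth coefficients give local existence and uniqueness by a standard theorem, the pseudo-norm relation~(\ref{eqn.pseudo.eucli}) closes the $(t_s,\dot t_s)$ system, and an It\^o computation projecting $\dot x_s$ onto the sphere yields the spherical subdiffusion (this is exactly the content of the paper's Lemma~\ref{cor.eucli.cartesien}). The only difference is that you supply an explicit non-explosion argument via Gr\"onwall on $\mathbb E[\dot t_s^2]$, whereas the paper simply appeals to a general theorem in \cite{ikeda}; your extra detail is sound, and in fact the paper's later comparison Lemmas~\ref{lem.ergo}--\ref{LEM.COMPARPROJ} (domination by an ergodic one-dimensional diffusion with constant Hubble parameter $H_\infty$) provide an alternative and slightly cleaner route to non-explosion than the moment bound you sketch.
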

\begin{rmk}
Given a point $(\xi, \dot{\xi})\in T^1_+ \mathcal M$, we will denote by $\mathbb P_{(\xi, \dot{\xi})}$ the law of the relativistic diffusion starting from $(\xi, \dot{\xi})$ and by $\mathbb E_{(\xi, \dot{\xi})}$ the associated expectation. Unless otherwise stated, the word ``almost surely'' will mean $\mathbb P_{(\xi, \dot{\xi})}-$almost surely. The two above subdiffusions will be called the \emph{temporal} and \emph{spherical} diffusions respectively.
\end{rmk}


\subsection{Asymptotics of the relativistic diffusion}\label{sec.asymp}
As conjectured in \cite{flj}, we show that the relativistic diffusion asymptotically behaves like light rays, \emph{i.e.} light-like geodesics. Indeed, from Remark \ref{rem.pseudo}, we know that the first projection $t_s$ of the (non-Markovian) process $\xi_s=(t_s, x_s)  \in \mathcal M$ goes almost-surely to infinity with $s$. We shall prove that its spatial part $x_s$ converges almost surely to a random point $x_{\infty}$ in $\mathbb R^3$, so that the diffusion asymptotically follows a line $D_{\infty}$ in $\mathcal M$, see Fig. \ref{fig.asymp} below, which is the typical behavior of a light-like geodesic. Moreover, we shall see that the normalized derivative $\dot{x}_s/|\dot{x}_s|$ is recurrent so that the curve $(\xi_s)_{s \geq 0}$ actually winds along the line $D_{\infty}$ in a recurrent way.

\begin{figure}[ht]
\hspace{2.5cm}\scalebox{0.7}{\begin{picture}(0,0)%
\includegraphics{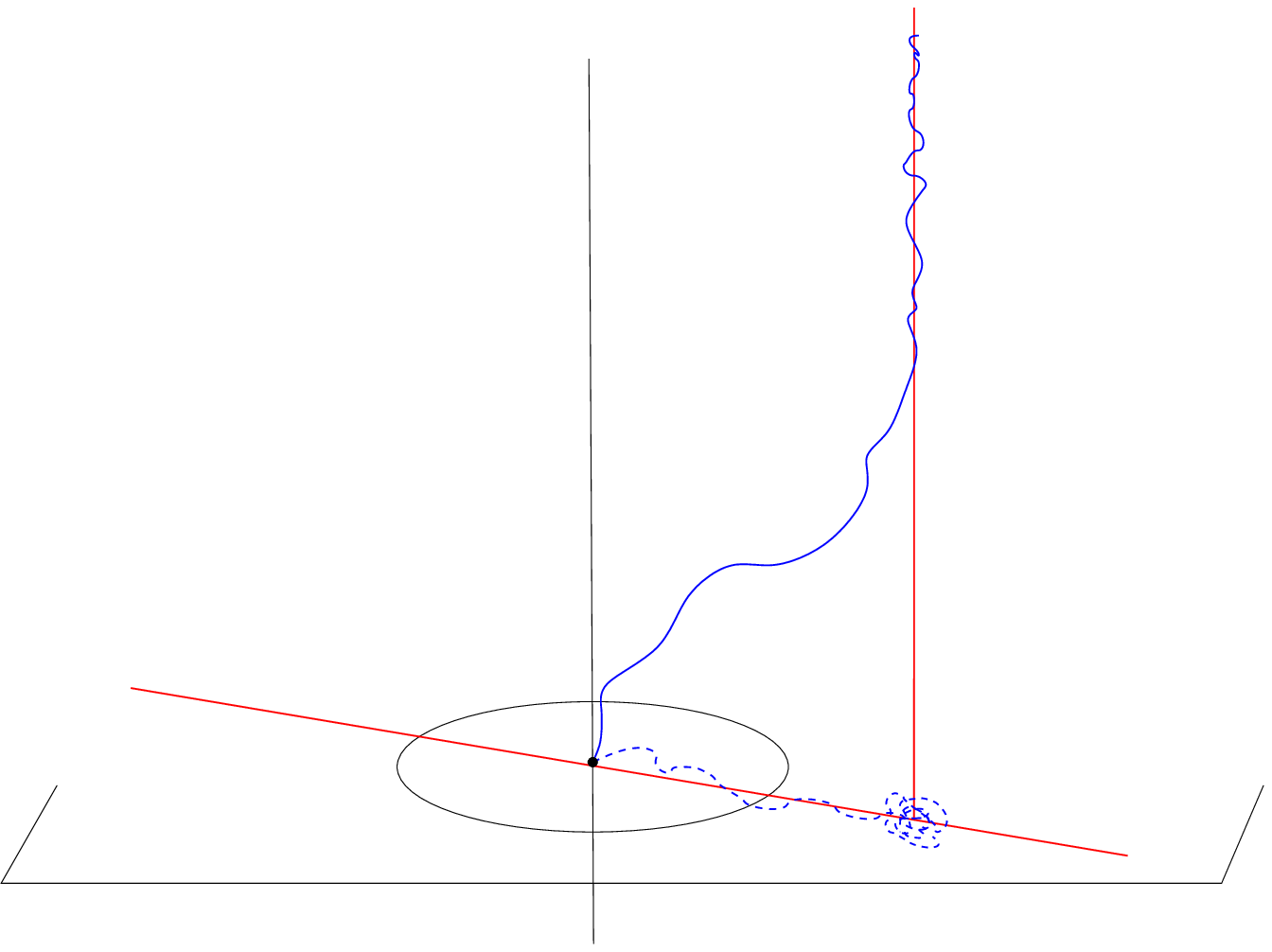}%
\end{picture}}%
\setlength{\unitlength}{2072sp}%
\begingroup\makeatletter\ifx\SetFigFontNFSS\undefined%
\gdef\SetFigFontNFSS#1#2#3#4#5{%
  \reset@font\fontsize{#1}{#2pt}%
  \fontfamily{#3}\fontseries{#4}\fontshape{#5}%
  \selectfont}%
\fi\endgroup%
\scalebox{0.7}{\begin{picture}(12219,9523)(124,-8617)
\put(9181,-6676){\makebox(0,0)[lb]{\smash{{\SetFigFontNFSS{9}{10.8}{\familydefault}{\mddefault}{\updefault}{\color[rgb]{0,0,0}$x_{\infty}$}%
}}}}
\put(7606,-4246){\makebox(0,0)[lb]{\smash{{\SetFigFontNFSS{9}{10.8}{\familydefault}{\mddefault}{\updefault}{\color[rgb]{0,0,0}$\xi_s$}%
}}}}
\put(9136,-4921){\makebox(0,0)[lb]{\smash{{\SetFigFontNFSS{9}{10.8}{\familydefault}{\mddefault}{\updefault}{\color[rgb]{1,0,0}line $D_{\infty}$}%
}}}}
\put(586,-7306){\makebox(0,0)[lb]{\smash{{\SetFigFontNFSS{9}{10.8}{\familydefault}{\mddefault}{\updefault}{\color[rgb]{0,0,0}$\mathbb R^3$}%
}}}}
\put(5716,569){\makebox(0,0)[lb]{\smash{{\SetFigFontNFSS{9}{10.8}{\familydefault}{\mddefault}{\updefault}{\color[rgb]{0,0,0}$\infty$}%
}}}}
\put(5806,-8521){\makebox(0,0)[lb]{\smash{{\SetFigFontNFSS{9}{10.8}{\familydefault}{\mddefault}{\updefault}{\color[rgb]{0,0,0}$0$}%
}}}}
\put(7516,-7171){\makebox(0,0)[lb]{\smash{{\SetFigFontNFSS{9}{10.8}{\familydefault}{\mddefault}{\updefault}{\color[rgb]{0,0,0}$x_s$}%
}}}}
\put(5266,-6811){\makebox(0,0)[lb]{\smash{{\SetFigFontNFSS{9}{10.8}{\familydefault}{\mddefault}{\updefault}{\color[rgb]{0,0,0}$\xi_0=(t_0,x_0)$}%
}}}}
\put(7786,-8026){\makebox(0,0)[lb]{\smash{{\SetFigFontNFSS{9}{10.8}{\familydefault}{\mddefault}{\updefault}{\color[rgb]{0,0,0}$\frac{\dot{x}_s}{|\dot{x}_s|}$ recurrent}%
}}}}
\end{picture}}%
\caption{Typical path of the relativistic diffusion in $\mathcal M=(0, +\infty) \times_{\alpha} \mathbb R^3$.}
\label{fig.asymp}
\end{figure}

To state precise results, let us introduce the following notations.
Given two positive constants $a$ and $b$, let $\nu_{a,b}$ be the probability measure on $(1,+\infty)$ admitting the following density with respect to Lebesgue measure:
\[
\nu_{a, b}(x) :=C_{a,b} \times \sqrt{x^2-1} \times \exp \left(- \frac{2a}{b^2} x \right),
\]
where $C_{a,b}$ is the normalizing constant. If $f \in \mathbb L^1(\nu_{a, b})$, we will write 
$\nu_{a, b}(f):=\int f(x) \nu_{a, b}(x)dx.$
The following theorem summarizes the almost sure asymptotics of the relativistic diffusion, its proofs is given in Sect. \ref{sec.proofasymp} below.

\begin{thm}\label{the.asymp}
Let $(\xi_0, \dot{\xi}_0) \in T^1_+ \mathcal M$, and let $(\xi_s, \dot{\xi}_s)=(t_s, x_s, \dot{t}_s, \dot{x}_s)$ be the relativistic diffusion starting from $(\xi_0, \dot{\xi}_0)$. Then as $s$ goes to infinity, we have the following almost sure asymptotics:
\begin{enumerate}
\item the  non-Markovian process $\dot{t}_s$ is Harris-recurrent in $(1, +\infty)$. Moreover, if $f$ is a monotone, $\nu_{H_{\infty},\sigma}-$integrable function, or if it is bounded and continuous: 
\[
\lim_{s \to +\infty} \frac{1}{s} \: \int_0^s f(\dot{t}_u)du \stackrel{a.s.}{=} \nu_{H_{\infty},\sigma}(f). 
\]
In particular, $\displaystyle{\lim_{s \to +\infty} t_s /s  \stackrel{a.s.}{=} \nu_{H_{\infty},\sigma}(\mathrm{Id}) \in (0,+\infty)}$.
\item the spatial projection $x_s$ converges almost surely to a random point $x_{\infty}$ in $\mathbb R^3$.
\item the normalized spatial derivative $\dot{x}_s /|\dot{x}_s|$ is a time-changed Brownian motion on the sphere $\mathbb S^2 \subset \mathbb R^3$, in particular it is recurrent.
\end{enumerate}
\end{thm}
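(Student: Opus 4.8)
The starting point is that the temporal subdiffusion $(t_s,\dot t_s)$ of Proposition \ref{pro.exiuni} is essentially one-dimensional. Using (\ref{eqn.pseudo.eucli}) to write $\alpha(t_s)\alpha'(t_s)|\dot x_s|^2=H(t_s)(\dot t_s^2-1)$, the pair $(t_s,\dot t_s)$ solves
\[
dt_s=\dot t_s\,ds,\qquad d\dot t_s=\Bigl(-H(t_s)(\dot t_s^2-1)+\tfrac{3\sigma^2}{2}\dot t_s\Bigr)ds+\sigma\sqrt{\dot t_s^2-1}\,dB_s,
\]
with $B$ a Brownian motion extracted from $M^{\dot t}$ by the Dambis--Dubins--Schwarz representation. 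Freezing $H(t_s)$ at $H_\infty$ gives an autonomous one-dimensional diffusion $\overline Y$ on $(1,+\infty)$; a direct computation shows its scale density is $(x^2-1)^{-3/2}e^{2H_\infty x/\sigma^2}$ and its speed density is proportional to $\sqrt{x^2-1}\,e^{-2H_\infty x/\sigma^2}$, i.e.\ to $\nu_{H_\infty,\sigma}$. Both ends $1$ and $+\infty$ are non-attracting (the scale function diverges at each) and the speed measure is finite, so $\overline Y$ is positive Harris-recurrent with invariant law $\nu_{H_\infty,\sigma}$; the vanishing of $\sqrt{x^2-1}$ at $1$ is harmless, the drift $\tfrac{3\sigma^2}{2}x$ being strictly positive there, in accordance with $\dot t_s^2-1=\alpha^2(t_s)|\dot x_s|^2>0$ for a.e.\ $s$.

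The point is then to transfer the properties of $\overline Y$ to $\dot t_s$ by comparison, using the monotonicity of $H$. Since $\alpha$ is log-concave, $H\downarrow H_\infty$, so $H(t)\ge H_\infty$ for all $t$; as $x\mapsto\sigma\sqrt{x^2-1}$ is locally $\tfrac12$-Hölder, the one-dimensional comparison theorem with the common Brownian motion $B$ and $\dot t_0=\overline Y_0$ gives $\dot t_s\le\overline Y_s$ for all $s$, whence $\dot t_s$ cannot escape to $+\infty$ and $\sup_s\mathbb E[\dot t_s^2]<\infty$ (Lyapunov estimate for $\overline Y$ with $V(x)=x^2$). In the other direction, $t_s\to+\infty$ by Remark \ref{rem.pseudo}, so for each $\e>0$ there is $T$ with $H(t_s)\le H_\infty+\e$ on $[T,+\infty)$, and comparison on $[T,+\infty)$ gives $\dot t_s\ge\underline Y^{\e}_s$, the constant-Hubble-$(H_\infty+\e)$ diffusion started at $\dot t_T$, again positive recurrent with invariant law $\nu_{H_\infty+\e,\sigma}$. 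Sandwiching $\dot t_s$ between $\underline Y^{\e}$ and $\overline Y$, applying the ratio ergodic theorem to these one-dimensional diffusions, and letting $\e\to0$ — where $\nu_{H_\infty+\e,\sigma}(f)\to\nu_{H_\infty,\sigma}(f)$ by dominated convergence (the densities are comparable near $H_\infty$ and $f\in\mathbb L^1(\nu_{H_\infty,\sigma})$) — yields $\tfrac1s\int_0^s f(\dot t_u)\,du\to\nu_{H_\infty,\sigma}(f)$ for monotone $\nu_{H_\infty,\sigma}$-integrable $f$. The same sandwich shows the occupation measures of $\dot t_s$ are a.s.\ tight with every weak limit determined on monotone test functions, hence converge weakly to $\nu_{H_\infty,\sigma}$, giving the limit for bounded continuous $f$ and, via $f=\mathbf 1_{[a,b]}$, the Harris recurrence of $\dot t_s$ in $(1,+\infty)$. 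Applying the limit with $f=\mathrm{Id}$ and $t_s=t_0+\int_0^s\dot t_u\,du$ gives $t_s/s\to\nu_{H_\infty,\sigma}(\mathrm{Id})\in(0,+\infty)$; this proves item (1).

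For item (2), write $x_s=x_0+\int_0^s\dot x_u\,du$; it suffices to show $\int_0^{\infty}|\dot x_u|\,du<\infty$ a.s. From $H\ge H_\infty$ and $t_u\ge t_0+u$ we get $\alpha(t_u)\ge\alpha(t_0)e^{H_\infty u}$, so by (\ref{eqn.pseudo.eucli}) and Cauchy--Schwarz,
\[
\int_0^{\infty}|\dot x_u|\,du\le\Bigl(\int_0^{\infty}e^{-H_\infty u}\,du\Bigr)^{1/2}\Bigl(\int_0^{\infty}e^{H_\infty u}\,\frac{\dot t_u^2-1}{\alpha(t_u)^2}\,du\Bigr)^{1/2}\le C\Bigl(\int_0^{\infty}e^{-H_\infty u}\,\dot t_u^2\,du\Bigr)^{1/2},
\]
whose last member has finite expectation, $C\int_0^{\infty}e^{-H_\infty u}\mathbb E[\dot t_u^2]\,du\le CH_\infty^{-1}\sup_v\mathbb E[\dot t_v^2]<\infty$; hence $x_s$ converges a.s.\ to some $x_\infty\in\mathbb R^3$. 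For item (3), put $r_s:=|\dot x_s|$, so $\alpha(t_s)^2 r_s^2=\dot t_s^2-1$. In the $\dot x$-equation the drift is proportional to $\dot x_s$, hence radial and invisible to $\dot x_s/r_s$, while the martingale part has covariation $\sigma^2(\dot x^i_s\dot x^j_s+\delta_{ij}/\alpha(t_s)^2)\,ds$, whose part transverse to $\dot x_s$ has intensity $\sigma^2/\alpha(t_s)^2$ per direction. The standard skew-product computation (Itô's formula for $\dot x_s/r_s$) shows that $\dot x_s/|\dot x_s|$ solves the martingale problem for $\tfrac12\Delta_{\mathbb S^2}$ run along the clock $c_s:=\int_0^s\sigma^2/(\alpha(t_u)^2 r_u^2)\,du=\sigma^2\int_0^s du/(\dot t_u^2-1)$, i.e.\ $\dot x_s/|\dot x_s|=\Theta(c_s)$ for a Brownian motion $\Theta$ on $\mathbb S^2$. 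Applying item (1) to the monotone, $\nu_{H_\infty,\sigma}$-integrable $f(x)=1/(x^2-1)$ (integrable near $1$ since $\sqrt{x^2-1}/(x^2-1)=1/\sqrt{x^2-1}$ is) gives $c_s\sim\sigma^2\,\nu_{H_\infty,\sigma}(f)\,s\to+\infty$ a.s.; since Brownian motion on the compact manifold $\mathbb S^2$ is recurrent, so is $\dot x_s/|\dot x_s|$, which proves item (3).

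\textbf{Main obstacle.} The delicate step is item (1): $\dot t_s$ is only a coordinate of the Markov process $(t_s,\dot t_s)$, which is itself transient in $t$, so one cannot invoke one-dimensional ergodic theory directly and must control the slowly vanishing perturbation $H(t_s)-H_\infty$. It is the monotonicity of $H$ that makes the two-sided comparison with constant-Hubble diffusions available, and the continuity $\e\mapsto\nu_{H_\infty+\e,\sigma}$ that allows the passage to the limit; once item (1) is in hand, items (2) and (3) are comparatively routine.
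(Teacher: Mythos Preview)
Your argument is correct and, for items (1) and (3), follows essentially the paper's route: sandwich $\dot t_s$ between two constant-Hubble one-dimensional diffusions using the monotonicity of $H$, apply the ergodic theorem to the bounding processes, and let the Hubble parameter tend to $H_\infty$; then identify $\dot x_s/|\dot x_s|$ as a spherical Brownian motion run along the clock $\sigma^2\int_0^s(\dot t_u^2-1)^{-1}du$ and use item (1) with the monotone $\nu_{H_\infty,\sigma}$-integrable test function $f(x)=1/(x^2-1)$ to see that the clock diverges. The paper organizes the lower comparison via the processes $z^n$ that follow $\dot t$ on $[0,n]$ and then the $H(t_0+n)$-dynamics, which is the same idea as your $\underline Y^{\e}$ since $t_s\ge t_0+s$ makes the threshold time deterministic.

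The one genuine difference is item (2). The paper does not use any moment bound or Cauchy--Schwarz: it simply estimates $|\dot x_u|=\sqrt{\dot t_u^2-1}/\alpha(t_u)\le \dot t_u/\alpha(t_u)$ and changes variables via $dt_u=\dot t_u\,du$ to obtain
\[
\int_0^s|\dot x_u|\,du\le\int_{t_0}^{t_s}\frac{dv}{\alpha(v)}<\infty,
\]
the last integral being convergent because $\alpha$ has exponential growth. Your detour through $\sup_s\mathbb E[\dot t_s^2]<\infty$ is valid (the Lyapunov estimate for $\overline Y$ with $V(x)=x^2$ does give it, and the pathwise comparison $\dot t_s\le\overline Y_s$ transfers it), but it introduces an unnecessary probabilistic ingredient; the change of variables gives a pathwise bound depending only on the geometry of $\alpha$ and the pseudo-norm relation.
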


As noticed in the introduction, the above asymptotic results can be rephrased concisely thanks to the notion of causal boundary introduced in \cite{geroch}. In fact, in a fast expanding RW space-time $\mathcal M=(0, +\infty) \times_{\alpha} \mathbb R^3$, the causal boundary $\partial \mathcal M_c^+$ identifies with a spacelike copy of $\mathbb R^3$: a causal curve $\xi_u=(t_u, x_u)$ converges to a point $\xi_{\infty} \in \partial \mathcal M_c^+$ iff $t_u \to +\infty$ and $x_u \to x_{\infty} \in \mathbb R^3$, see \cite{flores}.

\subsection{Poisson boundary of the relativistic diffusion}
We now describe the Poisson boundary of the relativistic diffusion, that is we determine its invariant sigma field $\textrm{Inv}((\xi_s, \dot{\xi}_s)_{s \geq 0})$ or equivalently the set of bounded harmonic functions with respect to its infinitesimal generator $\mathcal L$. Owing to Theorem \ref{the.asymp}, the processes $\dot{t}_s$ and $\dot{x}_s/|\dot{x}_s|$ being recurrent, it is tempting to assert that the only non trivial asymptotic variable associated to the relativistic diffusion is the random point $x_{\infty} \in \mathbb R^3$. Nevertheless, 
the result is far from being trivial because some extra-information relating the temporal components and the spatial ones could be hidden. For example, in Minkowski space-time i.e. if $\alpha \equiv 1$, we have almost surely $t_s \to +\infty$ and $x_s/|x_s| \to \theta_{\infty} \in \mathbb S^2$, but the Poisson boundary of the relativistic diffusion is not reduced to the sigma algebra $\sigma(\theta_{\infty})$ because the difference $t_s - \langle x_s, \theta_{\infty}\rangle$ converges almost-surely to a random variable that is not measurable with respect to $\sigma(\theta_{\infty})$. 
\par
\smallskip
Using shift-coupling techniques, we first prove in Sect. \ref{sec.proofpoisson} (Propositions \ref{pro.poissontemp} and \ref{pro.poissonspatial}) that the invariant sigma fields of the temporal and spherical subdiffusions are indeed trivial. Then, taking into account the regularity of harmonic functions (hypoellipticity), and using the equivariance of infinitesimal generator under Euclidean translations, we construct an abstract conditionning  (Proposition \ref{pro.poisson}) to end up with the following result:

\begin{thm}\label{the.poisson}
Let $\mathcal M:=(0, +\infty) \times_{\alpha} \mathbb R^3$ be a RW space-time where $\alpha$ has exponential growth. Let $(\xi_0, \dot{\xi}_0) \in T^1_+ \mathcal M$ and let $(\xi_s, \dot{\xi}_s)=(t_s, x_s, \dot{t}_s, \dot{x}_s)$ be the relativistic diffusion starting from $(\xi_0, \dot{\xi}_0)$. Then, the invariant sigma field $\textrm{Inv}((\xi_s, \dot{\xi}_s)_{s \geq 0})$ of the whole diffusion coincides with the sigma field generated by the single variable $x_{\infty} \in \mathbb R^3$ up to $\mathbb P_{(\xi_0, \dot{\xi}_0)}-$negligeable sets. Equivalently, if $h$ is a bounded $\mathcal L-$harmonic function on $T^1_+ \mathcal M$, there exists a bounded measurable function $\psi$ on $\mathbb R^3$, such that
\[
 h(\xi, \dot{\xi}) = \mathbb E_{(\xi, \dot{\xi})}[\psi(x_{\infty})], \; \; \forall (\xi, \dot{\xi}) \in  T^1_+ \mathcal M.
 \]
\end{thm}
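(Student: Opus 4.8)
The plan is to prove Theorem \ref{the.poisson} by reducing the invariant sigma field of the full diffusion to that of the spherical subdiffusion plus the single variable $x_\infty$, and then invoking the triviality of the former. Concretely, I would proceed in three stages. \textbf{Stage 1: triviality of the subdiffusions.} First I would establish that $\mathrm{Inv}((t_s,\dot t_s)_{s\geq 0})$ is trivial (Proposition \ref{pro.poissontemp}). Since $\dot t_s$ is Harris-recurrent in $(1,+\infty)$ by Theorem \ref{the.asymp}(1), two copies of the temporal diffusion started from different points can be shift-coupled: one runs the two processes independently until $\dot t$ hits a common compact level set simultaneously (up to a time shift), then couples the driving noise. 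Harris recurrence and the ergodic average in Theorem \ref{the.asymp}(1) give the successful shift-coupling, which forces any bounded harmonic function of $(t,\dot t)$ to be constant. For the spherical diffusion $(t_s,\dot t_s,\dot x_s/|\dot x_s|)$ (Proposition \ref{pro.poissonspatial}), I would do the same with an extra ingredient: conditionally on the temporal path, $\dot x_s/|\dot x_s|$ is a time-changed spherical Brownian motion (Theorem \ref{the.asymp}(3)), which is recurrent on $\mathbb S^2$, so once the temporal components are shift-coupled one couples the spherical Brownian motions via a standard mirror/synchronous coupling on $\mathbb S^2$; again every bounded harmonic function is constant.

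\textbf{Stage 2: the abstract conditioning.} This is the crux. I would prove a lemma (Proposition \ref{pro.poisson}) stating: if a diffusion $Z$ has an equivariant hypoelliptic generator under the additive action of $\mathbb R^3$ on one block of coordinates, and if the ``quotiented'' diffusion $\bar Z$ obtained by forgetting $x$ has an explicit spatial asymptotic variable $x_\infty$, then $\mathrm{Inv}(Z)=\mathrm{Inv}(\bar Z)\vee\sigma(x_\infty)$ up to null sets. The idea: let $h$ be bounded $\mathcal L$-harmonic on $T^1_+\mathcal M$; by hypoellipticity $h$ is smooth. For fixed $(\xi,\dot\xi)$ consider the map $v\mapsto h(t,x+v,\dot t,\dot x)$ on $\mathbb R^3$; using the martingale $h(\xi_s,\dot\xi_s)$ together with the convergence $x_s\to x_\infty$ and the recurrence of $(\dot t_s,\dot x_s/|\dot x_s|)$, I want to show the conditional law of the invariant sigma field given $(t_s,\dot t_s,\dot x_s/|\dot x_s|)_{s\geq 0}$ is carried by $\sigma(x_\infty)$. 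The cleanest route: write $h(\xi,\dot\xi)=\lim_s \mathbb E_{(\xi,\dot\xi)}[h(\xi_s,\dot\xi_s)\mid \mathcal F^{\mathrm{sph}}_\infty\vee\sigma(x_\infty)]$ and argue, via the Markov property and translation-equivariance, that the conditional expectation of $h$ along the spherical subdiffusion depends on the past only through $x_\infty$. Equivalently, one shows that the bounded harmonic function $h$, when restricted to a fiber of the spherical projection, is translation-invariant in $x$ modulo the extra randomness of $x_\infty$: any harmonic function that is genuinely $x$-dependent can only encode the limit $x_\infty$ because the $x$-dynamics is a martingale with no drift (see \eqref{eqn.flj.eucli}) converging to $x_\infty$, so that $h(\xi_s,\dot\xi_s)=\mathbb E_{(\xi_s,\dot\xi_s)}[\psi(x_\infty)]$ for $\psi(x)=h$-limit along the line, and the equivariance pins down $\psi$ as a function on $\mathbb R^3$ alone.

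\textbf{Stage 3: conclusion.} Combining Stage 1 (so that $\mathrm{Inv}$ of the spherical subdiffusion is trivial) with Stage 2 gives $\mathrm{Inv}((\xi_s,\dot\xi_s)_{s\geq 0})=\sigma(x_\infty)$ up to $\mathbb P_{(\xi_0,\dot\xi_0)}$-negligible sets. The representation formula follows: any bounded invariant variable is $\mathbb P$-a.s. equal to $\psi(x_\infty)$ for some bounded measurable $\psi$ on $\mathbb R^3$, and correspondingly any bounded $\mathcal L$-harmonic $h$ satisfies $h(\xi,\dot\xi)=\mathbb E_{(\xi,\dot\xi)}[\psi(x_\infty)]$ by optional stopping/convergence of the bounded martingale $h(\xi_s,\dot\xi_s)$ to $\psi(x_\infty)$. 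The main obstacle is unquestionably Stage 2: the shift-coupling arguments of Stage 1 are by now fairly standard given Harris recurrence, but extracting the precise statement that the full invariant field is \emph{exactly} the spherical invariant field joined with $\sigma(x_\infty)$ — and no more hidden correlation, in contrast with the Minkowski case where $t_s-\langle x_s,\theta_\infty\rangle$ survives — requires carefully exploiting the \emph{integrability of $1/\alpha$} (exponential growth) to guarantee that no analogous residual quantity persists, and then packaging this into a clean conditioning statement. I expect to need the hypoellipticity (smoothness of $h$) in an essential way to differentiate $h$ along the translation directions and identify the dependence on $x$ with the martingale limit $x_\infty$.
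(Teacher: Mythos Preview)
Your overall architecture matches the paper exactly: triviality of the temporal and spherical subdiffusions via shift-coupling (Propositions \ref{pro.poissontemp} and \ref{pro.poissonspatial}), then an abstract conditioning step (Proposition \ref{pro.poisson}) exploiting translation-equivariance and hypoellipticity. Stage~1 is essentially right in spirit, though the paper's shift-coupling for the temporal part is not the generic ``wait until both hit a common level'' argument you sketch: it time-changes both copies by the strictly increasing $t_s$, sets $u^i_s=\dot t^i[(t^i)^{-1}_s]$, and shows via a martingale estimate on $\log\big((|u^1_s|^2-1)/(|u^2_s|^2-1)\big)$ that the graphs of the two temporal diffusions must intersect. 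The spherical step is then exactly the mirror coupling you describe.

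The genuine gap is Stage~2. First, a factual error: $x_s$ is \emph{not} a martingale with no drift; from \eqref{eqn.flj.eucli} one has $dx_s=\dot x_s\,ds$, which is pure drift with no martingale part. So the heuristic ``the $x$-dynamics is a martingale converging to $x_\infty$, hence harmonic functions can only see $x_\infty$'' does not hold, and the conditional-expectation argument you sketch (``show the conditional law of the invariant sigma field given the spherical path is carried by $\sigma(x_\infty)$'') is not an argument but a restatement of the goal. What the paper actually does is concrete and different: given a bounded invariant variable $Z$, it builds for each $y\in\mathbb R^3$ the recentred variable
\[
Z^y(\omega):=Z\big((\omega^1,\ \omega^2-x_\infty(\omega)+y)\big),
\]
checks that $Z^y$ is again shift-invariant, and uses the translation-equivariance (point~2 in the proof of Proposition \ref{pro.poisson}) to see that $(e,x)\mapsto\mathbb E_{(e,x)}[Z^y]$ is independent of $x$, hence $\mathcal L_E$-harmonic in $e$, hence constant by Stage~1. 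Call this constant $\psi(y)$. One then mollifies in $y$ to get continuity, evaluates at the random point $y=x_\infty(\omega)$, and lets the mollifier shrink; hypoellipticity is used only at the very end to pass to the limit in the identity $\int h(e,x+y)\rho_n(-y)\,dy=\int\mathbb E_{(e,x+y)}[\psi(x_\infty)]\rho_n(-y)\,dy$. The ``no hidden correlation'' phenomenon you worry about is handled precisely by this recentring trick $ \omega^2\mapsto\omega^2-x_\infty(\omega)$, not by any direct integrability estimate on $1/\alpha$; the exponential growth enters only to guarantee that $x_\infty$ exists. Your plan is missing this construction, and without it Stage~2 does not go through.
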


In other words, all the asymptotic information on the relativistic diffusion is encoded in the point $x_{\infty} \in \mathbb R^3$ or equivalently in the point $\xi_{\infty} =(\infty, x_{\infty})$ of the causal boundary $\partial \mathcal M_c^+$. The above theorem is thus very similar to Theorem 1 of \cite{ismael} asserting that the invariant sigma field of the relativistic diffusion in Minkowski space-time is generated by a random point on its causal boundary, which in that case identifies with the product $\mathbb R^+ \times \mathbb S^2$. It is tempting to ask if such a link between the Poisson and causal boundary holds in a more general context. This question is the object of a work in progress of the author and C. Tardif \cite{AT2}.

\section{Proofs of the results}\label{sec.proofs}
This last section is dedicated to the proofs of the different results stated above. Namely, the section \ref{sec.proofuniexi} below is devoted to the proof of Proposition \ref{pro.exiuni}, and in Sections \ref{sec.proofasymp} and \ref{sec.proofpoisson} we give the proofs of Theorems \ref{the.asymp} and \ref{the.poisson} respectively.

\subsection{Existence, uniqueness, reduction of the dimension}\label{sec.proofuniexi}
We first give the proof of Proposition \ref{pro.exiuni}  concerning the existence, the uniqueness and the lifetime of the relativistic diffusion. The coefficients in the system of stochastic differential equations (\ref{eqn.flj.eucli}) being smooth, the first assertions follow from classical existence and uniqueness theorems, see for example Theorem (2.3) p. 173 of \cite{ikeda}. Next, the fact that the temporal process $(t_s, \dot{t}_s)$ is a subdiffusion of the whole relativistic diffusion is an immediate consequence of Equation (\ref{eqn.flj.eucli}) and the pseudo-norm relation (\ref{eqn.pseudo.eucli}), which allows to express the norm of the spatial derivative $\dot{x}_s$ in term of the temporal process. Finally, the analogous result concerning the spherical subdiffusion follows from a straightforward computation, namely setting $\Theta_s=(\Theta_s^1, \Theta_s^2, \Theta_s^3)$ where $\Theta^i_s:=\dot{x}_s^i/|\dot{x}_s|$ to lighten the expressions, we have the following lemma:

\begin{lem}\label{cor.eucli.cartesien}
The temporal process $(t_s, \dot{t}_s)$ and the spherical process $(t_s, \dot{t}_s, \Theta_s)$ are solutions of the following system of stochastic differential equations: 
\begin{eqnarray} 
         \displaystyle{d t_s=\dot{t}_s ds,} \quad     \displaystyle{d \dot{t}_s = -H(t_s) (\dot{t}_s^2-1)ds+ \frac{3 \sigma^2}{2} \dot{t}_s ds + d M^{\dot{t}}_s,}
          \label{eqn.ttpoint} \\
     \displaystyle{d \Theta^i_s = - \frac{\sigma^2}{\dot{t}_s^2-1} \times  \Theta^i_s ds + d M^{\Theta^i}_s,} 
        \label{eqn.sphere} 
\end{eqnarray}
where the brakets of the martingales $M^{\dot{t}}$ and $M^{\Theta^i}$ are given by
\begin{equation}\label{eqn.crochet}
\left \lbrace \begin{array}{l}
\displaystyle{d \langle M^{\dot{t}}, \, M^{\dot{t}} \rangle_s = \sigma^2 \left( \dot{t}_s^2-1 \right) ds,} \quad
\displaystyle{d \langle M^{\dot{t}}, \, M^{\Theta^i} \rangle_s = 0,} \\
\\
\displaystyle{d \langle M^{\Theta^i},  M^{\Theta^j} \rangle_s = \frac{\sigma^2}{\dot{t}_s^2-1}  \left(\delta_{ij} -  \Theta^i_s \Theta^j_s \right)ds}.
\end{array}\right.\end{equation}
\end{lem}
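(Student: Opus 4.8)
The plan is to establish the two stochastic systems separately: equation \eqref{eqn.ttpoint} for the temporal process will follow from a one-line substitution in \eqref{eqn.flj.eucli}, whereas equations \eqref{eqn.sphere} and \eqref{eqn.crochet} for the spherical process will be obtained by applying It\^o's formula to the degree-zero homogeneous map $v\mapsto v/|v|$ on $\mathbb R^3\setminus\{0\}$, the point being that this homogeneity annihilates almost every term that appears.

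For the temporal equation I start from the line $d\dot t_s = -\alpha(t_s)\alpha'(t_s)|\dot x_s|^2\,ds + \tfrac{3\sigma^2}{2}\dot t_s\,ds + dM^{\dot t}_s$ in \eqref{eqn.flj.eucli}. Writing $\alpha\alpha' = H\alpha^2$ and inserting the pseudo-norm relation \eqref{eqn.pseudo.eucli}, namely $\alpha^2(t_s)|\dot x_s|^2 = \dot t_s^2-1$, the drift becomes exactly $-H(t_s)(\dot t_s^2-1)$; the bracket $d\langle M^{\dot t},M^{\dot t}\rangle_s=\sigma^2(\dot t_s^2-1)\,ds$ is already in the announced form. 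Since the right-hand side only involves $(t_s,\dot t_s)$, this is the closed two-dimensional system \eqref{eqn.ttpoint}.

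For the spherical part I first note that $\Theta_s=\dot x_s/|\dot x_s|$ is well defined for $s>0$: by \eqref{eqn.pseudo.eucli}, $|\dot x_s|^2=(\dot t_s^2-1)/\alpha^2(t_s)$, and the one-dimensional diffusion $\dot t_s$ of \eqref{eqn.ttpoint} stays strictly above $1$, since near the boundary $\dot t=1$ its drift equals $\tfrac{3\sigma^2}{2}>0$ while its diffusion coefficient vanishes like $\sqrt{\dot t-1}$, so that a comparison with a squared Bessel process of dimension $3$ shows the boundary is never reached. On this full-measure event I apply It\^o's formula to $g^i(v):=v^i/|v|$. The mechanism making the computation short is the homogeneity of degree $0$ of each $g^i$: Euler's identities give $\sum_j v^j\partial_j g^i(v)=0$ and $\sum_{j,k}v^jv^k\partial_j\partial_k g^i(v)=0$, which, together with the brackets in \eqref{eqn.flj.eucli}, kill at once (i) the drift generated by the term $(-2H\dot t+\tfrac{3\sigma^2}{2})\dot x^i$ of $d\dot x^i_s$, (ii) the contribution of the $\dot x^i_s\dot x^j_s$ part of $d\langle M^{\dot x^i},M^{\dot x^j}\rangle_s$ to the It\^o correction, and (iii) the whole cross-bracket with $M^{\dot t}$ (since $d\langle M^{\dot t},M^{\dot x^k}\rangle_s\propto\dot t_s\dot x^k_s\,ds$), which yields $d\langle M^{\dot t},M^{\Theta^i}\rangle_s=0$. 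Only the isotropic part $\sigma^2\delta_{jk}/\alpha^2(t_s)$ of $d\langle M^{\dot x^j},M^{\dot x^k}\rangle_s$ survives, and the elementary identities $\Delta g^i(v)=-2v^i/|v|^3$ and $\sum_k\partial_k g^i(v)\partial_k g^j(v)=|v|^{-2}\bigl(\delta_{ij}-v^iv^j/|v|^2\bigr)$ then give the drift $-\tfrac{\sigma^2}{\alpha^2(t_s)|\dot x_s|^2}\Theta^i_s$ for $\Theta^i_s$ and the bracket $\tfrac{\sigma^2}{\alpha^2(t_s)|\dot x_s|^2}(\delta_{ij}-\Theta^i_s\Theta^j_s)\,ds$ for the remaining martingale $M^{\Theta^i}$. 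A last use of $\alpha^2(t_s)|\dot x_s|^2=\dot t_s^2-1$ turns these into \eqref{eqn.sphere} and \eqref{eqn.crochet}; since the resulting system only involves $(t_s,\dot t_s,\Theta_s)$, it is the claimed four-dimensional subdiffusion.

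I do not anticipate any serious difficulty: the argument is essentially an exercise in It\^o calculus organised around the degree-zero homogeneity of $v\mapsto v/|v|$. The one point genuinely worth care is the non-degeneracy $\dot x_s\neq 0$ (equivalently $\dot t_s>1$ for $s>0$), needed for $\Theta_s$ and its equation to make sense, and this reduces to a standard boundary classification for the scalar diffusion $\dot t_s$.
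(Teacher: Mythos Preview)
Your proposal is correct and matches the paper's approach: the paper simply states that the lemma ``follows from a straightforward computation'' via It\^o's formula applied to $\Theta^i_s=\dot x^i_s/|\dot x_s|$, together with the pseudo-norm relation \eqref{eqn.pseudo.eucli}, and your write-up carries this out in full. Your use of the degree-zero homogeneity of $v\mapsto v/|v|$ and Euler's identities is a clean way to organise the cancellations, and your justification that $\dot t_s>1$ for $s>0$ is exactly what the paper records in the remark immediately following the lemma.
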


\begin{rmk}From Remark \ref{rem.pseudo}, we know that $\dot{t}_s \geq 1$ a.s. for all $s \geq 0$. In fact, the Hubble fuction $H=\alpha'/\alpha$ being non-increasing, using standard comparison techniques, it is easy to see that $\dot{t}_s>1$ a.s for all $s>0$, so that the term $\dot{t}_s^2-1$ in the denominators above never vanishes.
\end{rmk}

\begin{rmk}\label{rem.brownienspherique}
The process $\Theta_s$ is a time-changed spherical Brownian motion. More precisely, introducing the clock 
\[
C_s := \sigma^2 \int_0^s \frac{du}{\dot{t}_u^2-1} du,
\]
 the time-changed process $\widetilde{\Theta}$ such that $ \widetilde{\Theta}(C_s) := \Theta_s$ is a standard spherical Brownian motion on $\mathbb S^2 \subset \mathbb R^3$ and it is independent of the temporal subdiffusion. 
\end{rmk}

\subsection{Asymptotic behavior of the diffusion}\label{sec.proofasymp}
We now prove the results stated in Theorem \ref{the.asymp} concerning the asymptotic behavior of the relativistic diffusion.
We distinguish the cases of the temporal components of the diffusion (Proposition \ref{pro.temp} below) and its spatial components (Proposition \ref{pro.spatial}). 

\subsubsection{Asymptotic behavior of the temporal subdiffusion}
In this paragraph, the word ``almost sure'' refers to the law of the temporal subdiffusion solution of Equation (\ref{eqn.ttpoint}). The first point of Theorem \ref{the.asymp}
corresponds to the following proposition. 
\begin{prop}\label{pro.temp}
Let $(t_0, \dot{t}_0) \in (0, +\infty) \times [1, +\infty)$ and let $(t_s, \dot{t}_s)$ be the solution of Equation (\ref{eqn.ttpoint}) starting from $(t_0, \dot{t}_0)$.
Then, the process $\dot{t}_s$ is Harris-recurrent in $(1, +\infty)$ and if $f$ is a monotone, $\nu_{H_{\infty},\sigma}-$integrable function, or if it is bounded and continuous: 
\[ 
\lim_{s \to +\infty} \frac{1}{s} \: \int_0^s f(\dot{t}_u)du \stackrel{a.s.}{=} \nu_{H_{\infty},\sigma}(f). 
\]
In particular 
\[ 
\lim_{s \to +\infty} \frac{t_s}{s}  =\lim_{s \to +\infty} \frac{1}{s} \: \int_0^s \dot{t}_u du \stackrel{a.s.}{=} \nu_{H_{\infty},\sigma}(\textrm{Id}). 
\]
\end{prop}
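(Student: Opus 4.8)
The plan is to study the two-dimensional diffusion $(t_s,\dot t_s)$ solving \eqref{eqn.ttpoint} by first isolating the one-dimensional component $\dot t_s$, showing it is a recurrent diffusion on $(1,+\infty)$ whose ``speed measure at infinity'' is $\nu_{H_\infty,\sigma}$, and then deducing the ergodic averages and the consequence for $t_s/s$ by a Cesàro/ratio-ergodic argument. Note that $\dot t_s$ alone is \emph{not} Markov — its drift $-H(t_s)(\dot t_s^2-1)+\tfrac{3\sigma^2}{2}\dot t_s$ depends on $t_s$ — but since $t_s$ is strictly increasing to $+\infty$ (Remark \ref{rem.pseudo}) and $H(t_s)\downarrow H_\infty>0$, the process $\dot t_s$ is, asymptotically, a time-homogeneous diffusion driven by the ``limit'' generator
\[
\mathcal G_\infty f(v) \;=\; \frac{\sigma^2}{2}(v^2-1)f''(v) \;+\;\Big(-H_\infty(v^2-1)+\tfrac{3\sigma^2}{2}v\Big)f'(v).
\]
A direct computation of the scale and speed densities of $\mathcal G_\infty$ produces an invariant density proportional to $\sqrt{v^2-1}\,\exp(-2H_\infty v/\sigma^2)$, i.e. exactly $\nu_{H_\infty,\sigma}$; one checks that both boundaries $1$ and $+\infty$ are non-attracting (the exponential weight kills $+\infty$, and near $v=1$ the degeneracy $v^2-1\to 0$ together with the strictly positive drift $\tfrac{3\sigma^2}{2}>0$ pushes the process away), so $\mathcal G_\infty$ is positive recurrent with finite invariant measure $\nu_{H_\infty,\sigma}$.

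The main work is to upgrade this heuristic to a genuine statement about the non-homogeneous process $\dot t_s$. I would run a comparison/coupling argument: for any $\e>0$, for $s$ large enough $H(t_s)\in[H_\infty,H_\infty+\e]$, so $\dot t_s$ can be squeezed (in the sense of pathwise comparison of one-dimensional SDEs with the same Brownian motion, after the usual time-change turning the bracket $\sigma^2(\dot t_s^2-1)\,ds$ into true Brownian time) between two time-homogeneous diffusions with generators $\mathcal G_{H_\infty}$ and $\mathcal G_{H_\infty+\e}$. Each bounding diffusion is positive Harris recurrent with explicit invariant law $\nu_{H_\infty,\sigma}$, resp. $\nu_{H_\infty+\e,\sigma}$; applying the ergodic theorem for positive recurrent one-dimensional diffusions to the bounds, then letting $\e\downarrow 0$ (and using $\nu_{H_\infty+\e,\sigma}(f)\to\nu_{H_\infty,\sigma}(f)$ by dominated convergence, valid for monotone integrable $f$ or bounded continuous $f$), gives
\[
\lim_{s\to+\infty}\frac1s\int_0^s f(\dot t_u)\,du \;\stackrel{a.s.}{=}\;\nu_{H_\infty,\sigma}(f).
\]
Harris recurrence of $\dot t_s$ in $(1,+\infty)$ follows similarly: a neighbourhood of any point is visited infinitely often because it is for the comparison diffusions, and a Harris/regeneration structure can be read off from the hitting times of a fixed reference level. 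A technical subtlety to handle carefully: the time-change from proper time $s$ to the natural scale of $\dot t$ involves $\int_0^s\sigma^2(\dot t_u^2-1)\,du$, whose asymptotics must be controlled simultaneously — but monotonicity of $\dot t_s\mapsto \dot t_s^2-1$ and the recurrence estimates make this self-consistent, so I would set it up as a fixed-point-free bootstrap: first get rough bounds $1<\liminf\dot t_s\le\limsup\dot t_s<\infty$ a.s. from the comparison, which guarantees the clock $\int_0^s(\dot t_u^2-1)\,du$ grows linearly, then feed that back to get the sharp ergodic average.

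Finally, the statement about $t_s/s$ is immediate once the ergodic average is known: since $dt_s=\dot t_s\,ds$ we have $t_s/s = t_0/s + \tfrac1s\int_0^s \dot t_u\,du$, and applying the ergodic average to the monotone (increasing) function $f=\mathrm{Id}$, which is $\nu_{H_\infty,\sigma}$-integrable because of the exponential factor $\exp(-2H_\infty v/\sigma^2)$, yields $\lim_s t_s/s = \nu_{H_\infty,\sigma}(\mathrm{Id})\in(0,+\infty)$; positivity and finiteness of this constant are clear since $\nu_{H_\infty,\sigma}$ is a probability measure supported on $(1,+\infty)$ with a finite first moment. The hard part, as indicated, is the passage from the limiting-generator heuristic to rigorous control of the genuinely non-Markovian $\dot t_s$ — i.e. making the comparison coupling and the clock bootstrap airtight — rather than any of the explicit one-dimensional diffusion computations, which are routine.
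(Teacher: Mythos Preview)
Your proposal is correct and follows essentially the same route as the paper: sandwich $\dot t_s$ between two constant-$H$ one-dimensional diffusions (the paper uses $H(t_0+n)$ below and $H_\infty$ above, you use $H_\infty+\varepsilon$ and $H_\infty$), apply the ergodic theorem to each bound, and let the parameter tend to $H_\infty$ using dominated convergence of $\nu_{H,\sigma}(f)$; the extension to bounded continuous $f$ and the conclusion for $t_s/s$ are handled identically.

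One simplification worth noting: your worry about a time-change and a ``clock bootstrap'' is unnecessary. All three SDEs (for $\dot t_s$ and for the two comparison processes) share the \emph{same} diffusion coefficient $\sigma\sqrt{x^2-1}$ as a function of the state, so the standard pathwise comparison theorem for one-dimensional SDEs driven by the same Brownian motion applies directly in proper time $s$, with no time-change required --- this is exactly how the paper sets it up (representing $dM^{\dot t}_s=\sigma\sqrt{\dot t_s^2-1}\,dB_s$ and comparing drifts, using that $H_\infty\le H(t_s)\le H(t_0+n)$ for $s\ge n$ since $t_s\ge t_0+s$). Dropping the time-change removes the circularity you flagged and makes the argument entirely straightforward.
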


The proof of the proposition, which is given below, is based on standard comparison techniques and on the two following elementary lemmas. Recall that the Hubble function $H$ is supposed to be non-increasing.

\begin{lem}Given a constant $H_0>0$, $\dot{t}_0 \in [1, +\infty)$ and a real standard Brownian motion $B$, the following stochastic differential equation 
\[
d \dot{t}_s= -H_0 \times \left( \dot{t}_s^2-1 \right)ds  + \frac{3 \sigma^2}{2} \dot{t}_s ds+ \sigma \sqrt{\dot{t}_s^2-1} \,dB_s
\]
has a unique strong solution starting from $\dot{t}_0$, well defined for all times $s \geq 0$. Moreover, $\dot{t}_s$ admits the probability measure $\nu_{H_0,\sigma}$ introduced in Sect. \ref{sec.asymp} as an invariant measure. In particular, it is ergodic. 
\label{lem.ergo}\end{lem}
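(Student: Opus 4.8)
The plan is to read the equation displayed above as a one-dimensional diffusion on the half-line $(1,+\infty)$ and to deduce all three assertions --- existence, uniqueness and global existence of the solution, invariance of $\nu_{H_0,\sigma}$, and ergodicity --- from its scale function and speed measure. On $(1,+\infty)$ the drift $b(x):=-H_0(x^2-1)+\tfrac{3\sigma^2}{2}x$ is smooth and the dispersion coefficient $x\mapsto\sigma\sqrt{x^2-1}$ is locally Lipschitz away from $1$ and $\tfrac12$-H\"older near $1$, so I would first invoke the Yamada--Watanabe pathwise-uniqueness criterion together with the local Lipschitzness of $b$ (see \cite{ikeda}) to get a unique strong solution up to the first exit time of $(1,+\infty)$, and then show that this exit time is a.s.\ infinite.

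For the quantitative part I would compute, from $\tfrac{2b(x)}{\sigma^2(x^2-1)}=-\tfrac{2H_0}{\sigma^2}+\tfrac{3x}{x^2-1}$, the scale density and the speed density, which up to positive multiplicative constants are
\[
s'(x)=\exp\!\Big(\tfrac{2H_0}{\sigma^2}\,x\Big)(x^2-1)^{-3/2},\qquad m(x)=\tfrac{1}{\sigma^2}\,\sqrt{x^2-1}\,\exp\!\Big(-\tfrac{2H_0}{\sigma^2}\,x\Big).
\]
Near $x=1$ one has $s'(x)\sim c_0\,(x-1)^{-3/2}$, which is non-integrable, so the scale function diverges at $1$ and the endpoint $1$ is not attainable from the interior (in accordance with the fact that $\dot t_s>1$ for $s>0$); on the other hand the speed measure is light there, $m(x)\sim c_1\,(x-1)^{1/2}$, and the classical boundary classification then identifies $1$ as an \emph{entrance} boundary, which is what legitimises starting the diffusion from $\dot t_0=1$. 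At $+\infty$ the exponential factor forces $\int^{+\infty}s'=+\infty$ while $\int^{+\infty}m<+\infty$, so $+\infty$ is inaccessible as well; by Feller's test there is therefore no explosion and the solution is global, which is the first assertion. (Alternatively one may extend the coefficients to all of $\mathbb{R}$ by replacing $x^2-1$ with $(x^2-1)^{+}$, solve on $\mathbb{R}$, and then use a comparison argument resting on $b(1)=\tfrac{3\sigma^2}{2}>0$ to check that the solution stays in $[1,+\infty)$.)

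Next, since $m$ has finite total mass --- here the positivity of $H_0$ is essential, through the exponential factor --- the diffusion is positive recurrent, hence it admits a unique invariant probability measure, namely the normalised speed measure. Matching the definition of $\nu_{a,b}$ with $(a,b)=(H_0,\sigma)$, and noting that $\tfrac{2a}{b^2}=\tfrac{2H_0}{\sigma^2}$, this normalised speed measure is precisely $\nu_{H_0,\sigma}$; a direct check is also available through the stationary Fokker--Planck equation $\tfrac12\big(\sigma^2(x^2-1)p(x)\big)'=b(x)p(x)$ with vanishing probability flux at the two inaccessible boundaries, which integrates to $p(x)\propto m(x)$. Positive recurrence together with uniqueness of the invariant probability then yields ergodicity (and, via the ergodic theorem for additive functionals of one-dimensional diffusions, the a.s.\ convergence of the time averages that will be used afterwards).

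The step I expect to be the main obstacle is the boundary analysis at $x=1$: the equation is genuinely degenerate there, so it must first be interpreted on the open half-line, and one has to establish simultaneously that the boundary is never reached for positive times \emph{and} that the diffusion can nevertheless be started from $\dot t_0=1$, i.e.\ that $1$ is an entrance boundary rather than an exit or a natural one. By contrast, the behaviour at $+\infty$ and the identification of the invariant measure with $\nu_{H_0,\sigma}$ are routine once the explicit expressions for $s'$ and $m$ are in hand.
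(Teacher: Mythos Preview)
Your proposal is correct. The paper does not supply a proof of this lemma, labelling it ``elementary''; your approach via the scale function and speed measure of the one-dimensional diffusion on $(1,+\infty)$, together with Feller's boundary classification and the Yamada--Watanabe criterion for pathwise uniqueness at the degenerate endpoint, is precisely the standard argument one expects here and fills in the details the authors omit.
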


\begin{lem} \label{LEM.COMPARPROJ}
Let $(t_0, \dot{t}_0) \in (0, +\infty) \times [1, +\infty)$ and let $(t_s, \dot{t}_s)$ be the solution of Equation (\ref{eqn.ttpoint}) starting from $(t_0, \dot{t}_0)$, where the martingale $M^{\dot{t}}$ is represented by a real standard Brownian motion $B$, i.e. $d M^{\dot{t}}_s = \sigma ( \dot{t}_s^2-1)^{1/2} d B_s$. Let $u_s$ and $v_s$ be the unique strong solutions, well defined for all $s\geq 0$, and starting from $u_0=v_0=\dot{t}_0$, of the equations: 
\[
\begin{array}{l} 
\displaystyle{d u_s=  -H(t_0)\left(u_s^2-1 \right)ds + \frac{3 \sigma^2}{2} u_s ds+ \sigma \sqrt{u_s^2-1} dB_s}, \\
\displaystyle{d v_s=  -H_{\infty}\left(v_s^2-1 \right)ds + \frac{3 \sigma^2}{2} v_s ds+ \sigma \sqrt{v_s^2-1} dB_s}.   
\end{array}
\]
Then, almost surely, for all $0 \leq s <+\infty$, one has $\displaystyle{ u_s \leq \dot{t}_s \leq v_s}$.
\end{lem}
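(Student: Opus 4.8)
The plan is to obtain the two inequalities from the classical one-dimensional comparison theorem for stochastic differential equations driven by a common Brownian motion and sharing a common diffusion coefficient, once we observe that along the sample path the drift felt by $\dot{t}_s$ is squeezed between the drifts of $u_s$ and of $v_s$.

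First I would record the monotonicity input. By Remark \ref{rem.pseudo} we have $\dot{t}_s \geq 1$, so the time process $t_s$ is non-decreasing and $t_s \geq t_0$ for all $s \geq 0$; since the Hubble function $H$ is non-increasing with $H(t) \downarrow H_\infty$, this gives $H_\infty \leq H(t_s) \leq H(t_0)$ for every $s \geq 0$, almost surely. Writing $b_\kappa(x) := -\kappa(x^2-1) + \tfrac{3\sigma^2}{2}x$ for $\kappa \geq 0$, the equation (\ref{eqn.ttpoint}) for $\dot{t}_s$ — with $M^{\dot{t}}$ represented as $dM^{\dot{t}}_s = \sigma(\dot{t}_s^2-1)^{1/2}dB_s$ — and the two equations defining $u_s$ and $v_s$ all have the form
\[
dZ_s = \beta_s\, ds + \sigma\sqrt{Z_s^2-1}\; dB_s,
\]
driven by the \emph{same} Brownian motion $B$, with the \emph{same} diffusion coefficient $z \mapsto \sigma\sqrt{z^2-1}$ on $[1,+\infty)$, the drifts being $\beta_s = b_{H(t_s)}(\dot{t}_s)$, $\beta_s = b_{H(t_0)}(u_s)$ and $\beta_s = b_{H_\infty}(v_s)$ respectively. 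Since $x^2-1 \geq 0$ on $[1,+\infty)$ and $\kappa \mapsto b_\kappa(x)$ is non-increasing, the sandwich on $H(t_s)$ translates into $b_{H(t_0)}(x) \leq b_{H(t_s)}(x) \leq b_{H_\infty}(x)$ for all $x \in [1,+\infty)$ and all $s \geq 0$. One also checks, exactly as in the remark following Lemma \ref{cor.eucli.cartesien}, that $u_s$ and $v_s$ remain in $[1,+\infty)$ so the square roots make sense: near the boundary point $1$ the drift equals $\tfrac{3\sigma^2}{2}>0$ and the equation behaves like a squared-Bessel process of dimension $3$, which stays strictly positive.

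Then I would invoke the comparison theorem. The diffusion coefficient $z\mapsto \sigma\sqrt{z^2-1}$ is locally Lipschitz on $(1,+\infty)$ and, near $1$, comparable to $\sqrt{2}\,\sigma\sqrt{z-1}$, hence satisfies Yamada's condition, while the drifts $b_\kappa$ are polynomials, hence locally Lipschitz. By the Yamada–Watanabe criterion, pathwise uniqueness holds for the autonomous equations defining $u_s$ and $v_s$, so the one-dimensional comparison theorem applies in the version where one of the two drift coefficients is a general progressively measurable process dominated (resp.\ dominating) the autonomous drift of the other equation; see e.g.\ \cite{ikeda}, Theorem VI.1.1 and the remarks thereafter. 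Applied to the pair $(u_s, \dot{t}_s)$, which start from the same point $\dot{t}_0$ and satisfy $b_{H(t_0)}(x) \leq b_{H(t_s)}(x)$, this yields $u_s \leq \dot{t}_s$ for all $s \geq 0$ almost surely; applied to the pair $(\dot{t}_s, v_s)$, using $b_{H(t_s)}(x) \leq b_{H_\infty}(x)$, it yields $\dot{t}_s \leq v_s$, which is the claim.

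The main technical point — and the only place where some care is needed — is that the coefficient in (\ref{eqn.ttpoint}) is \emph{path-dependent} through $H(t_s)$ while the diffusion coefficient is \emph{degenerate} at the boundary $x=1$, so one must use the comparison theorem in its form for (possibly random) time-dependent drifts together with a Yamada-type, rather than a Lipschitz, condition on the diffusion coefficient. The fact that $H(t_s)$ is deterministic as a function of the non-decreasing path $t_\cdot$ and takes values in the fixed interval $[H_\infty, H(t_0)]$ is precisely what legitimizes the pointwise drift comparison used above.
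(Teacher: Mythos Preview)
Your proof is correct and is precisely the ``standard comparison techniques'' argument the paper alludes to: in the paper this lemma is stated without proof, being introduced as one of two ``elementary lemmas'' preceding the proof of Proposition~\ref{pro.temp}. Your write-up supplies exactly the details the paper leaves implicit --- the drift ordering coming from $H_\infty \le H(t_s) \le H(t_0)$ together with the Ikeda--Watanabe comparison theorem under a Yamada-type condition on the common diffusion coefficient --- so there is nothing to contrast.
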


\begin{proof}[Proof of Proposition \ref{pro.temp}]
There exists a standard Brownian motion $B$ such that the temporal process $(t_s, \dot{t}_s)$ is the solution of the stochastic differential equations
\[
d t_s = \dot{t}_s ds, \quad d \dot{t}_s= -H(t_s) \times \left( \dot{t}_s^2-1 \right)ds  + \frac{3 \sigma^2}{2} \dot{t}_s ds+ \sigma \sqrt{\dot{t}_s^2-1} \,dB_s.
\]
Let $z_s$ be the unique strong solution, starting from $z_0=\dot{t}_0$, of the stochastic differential equation
\[
d z_s= -H_{\infty} \times \left( |z_s|^2-1 \right)ds  + \frac{3 \sigma^2}{2} z_s ds+ \sigma \sqrt{|z_s|^2-1} \,dB_s.
\]
For $n \in \mathbb N$, let $z^n_s$ be the process that coincides with $\dot{t}_s$ on $[0,n]$ and is the solution on $[n, +\infty)$ of the stochastic differential equation
\[
d z^n_s= -H(t_0+n) \times \left( |z^n_s|^2-1 \right)ds  + \frac{3 \sigma^2}{2} z^n_s ds+ \sigma \sqrt{|z^n_s|^2-1} \,dB_s.
\]
By Lemma \ref{LEM.COMPARPROJ}, for all $n \geq 0$ and $s \geq 0$, one has $z^n_s \leq \dot{t}_s \leq z_s$.
By Lemma \ref{lem.ergo}, both processes $z^0_s$ and $z_s$ are ergodic in $(1, +\infty)$, in particular, they are Harris recurrent and so is $\dot{t}_s$.
Now consider an increasing and $\nu_{H_{\infty},\sigma}-$integrable function $f$, and fix an $\varepsilon>0$.
For all $n \in \mathbb N$, the function $f$ is also integrable against the measure $\nu_{H(t_0+n),\sigma}$ and by dominated convergence theorem, $\nu_{H(t_0+n),\sigma}(f) $ converges to $\nu_{H_{\infty},\sigma}(f)$ when $n$ goes to infinity. 
Choose $n$ large enough so that we have $ | \nu_{H(t_0+n),\sigma}(f) - \nu_{H_{\infty},\sigma}(f)| \leq \varepsilon$. As $z^n_s \leq \dot{t}_s \leq z_s$ for $s \geq 0$, one has almost surely:
\[
\displaystyle{\int_0^s f(z^n_u)du \leq \int_0^s f(\dot{t}_u)du \leq \int_0^s f(z_u)du}.
\]
The integer $n$ being fixed, by the ergodic theorem, we have that almost surely:
\[
\nu_{H_{\infty},\sigma}(f)- \varepsilon \leq \nu_{H(t_0+n),\sigma}(f) \leq \liminf_{s \to + \infty} \frac{1}{s} \int_0^s f(\dot{t}_u)du \leq
 \limsup_{s \to + \infty}\frac{1}{s} \; \int_0^s f(\dot{t}_u)du \leq \nu_{H_{\infty},\sigma}(f).
 \]
Letting $\varepsilon$ goes to zero, we get the desired result.  
As any smooth function can be written as the difference of two monotone functions, the above convergence extends to functions in the set $C^{1}_b=\{ f, \; f' \; \textrm{is bounded on} \; (1, +\infty)\}$, and then by regularization, to the set of bounded continuous functions on $(1,+\infty)$. 
\end{proof}

\subsubsection{Asymptotic behavior of the spatial components} 
The second and third points of Theorem \ref{the.asymp} are the object of the next proposition:
\begin{prop}  \label{pro.spatial}
Let $(\xi_s, \dot{\xi}_s)=(t_s, x_s, \dot{t}_s, \dot{x}_s)$ be the relativistic diffusion starting from $(\xi_0, \dot{\xi}_0) \in T^1_+ \mathcal M$. Then, as $s$ goes to infinity, the spatial projection $x_s$ converges almost surely to a random point $x_{\infty} \in \mathbb R^3$, and the process $\Theta_s=\dot{x}_s /|\dot{x}_s|$ is recurrent in $\mathbb S^2 \subset \mathbb R^3$.
\end{prop}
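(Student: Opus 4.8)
The statement splits into two essentially independent claims, and the plan is to treat them separately, the common inputs being the almost sure ergodic behaviour of the temporal subdiffusion provided by Proposition~\ref{pro.temp}, the pseudo-norm relation~(\ref{eqn.pseudo.eucli}), and the exponential growth of $\alpha$. The convergence of $x_s$ will come from an exponential decay estimate on $|\dot x_s|$ coming from the rapid growth of $\alpha$, and the recurrence of $\Theta_s$ from the time-change description of Remark~\ref{rem.brownienspherique} together with the divergence of the associated clock.

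For the convergence of $x_s$, since $x_s = x_0 + \int_0^s \dot x_u\,du$ it is enough to show $\int_0^{+\infty}|\dot x_u|\,du<+\infty$ almost surely and then set $x_\infty := x_0 + \int_0^{+\infty}\dot x_u\,du$. First I would rewrite~(\ref{eqn.pseudo.eucli}) as $|\dot x_s| = \sqrt{\dot t_s^2-1}\,/\,\alpha(t_s) \le \dot t_s/\alpha(t_s)$. The hypotheses on $\alpha$ give $H=\alpha'/\alpha \ge H_\infty$ on all of $(0,+\infty)$, hence $\alpha(t)\ge \alpha(t_0)e^{H_\infty(t-t_0)}$, and since $t_s\ge s$ (Remark~\ref{rem.pseudo}) this produces a deterministic constant $c_0=c_0(t_0,\sigma)$ with $|\dot x_s|\le c_0\,\dot t_s\,e^{-H_\infty s}$ for every $s\ge 0$. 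It then remains to see that $\int_0^{+\infty}\dot t_s e^{-H_\infty s}\,ds<+\infty$ almost surely; this I would deduce from the last assertion of Proposition~\ref{pro.temp}, namely that $F(s):=\int_0^s\dot t_u\,du \sim \nu_{H_\infty,\sigma}(\mathrm{Id})\,s$, which in particular gives a random $C$ with $F(s)\le C(1+s)$ for all $s$, so that the integration by parts $\int_0^T\dot t_s e^{-H_\infty s}\,ds = e^{-H_\infty T}F(T) + H_\infty\int_0^T e^{-H_\infty s}F(s)\,ds$ has a finite limit as $T\to+\infty$. (This is exactly where the exponential-growth hypothesis $H_\infty>0$ is used.) Alternatively, one could check directly that $s\mapsto\mathbb E_{(\xi_0,\dot\xi_0)}[\dot t_s]$ stays bounded, using the super-linear mean-reverting drift in~(\ref{eqn.ttpoint}), and conclude by Fubini; the pathwise route seems shorter.

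For the recurrence of $\Theta_s$, I would use the time-change description of Remark~\ref{rem.brownienspherique}: $\Theta_s = \widetilde\Theta(C_s)$ with $\widetilde\Theta$ a standard Brownian motion on the compact manifold $\mathbb S^2$ and $C_s = \sigma^2\int_0^s du/(\dot t_u^2-1)$. Since Brownian motion on $\mathbb S^2$ visits every non-empty open set at arbitrarily large times, it suffices to prove that $C_{+\infty}=+\infty$ almost surely. For this I would apply Proposition~\ref{pro.temp} to $f(x) = (x^2-1)^{-1}$, which is monotone (decreasing) on $(1,+\infty)$ and $\nu_{H_\infty,\sigma}$-integrable: the only delicate point is integrability near $x=1$, where $\nu_{H_\infty,\sigma}(x)\propto\sqrt{x^2-1}\,e^{-2H_\infty x/\sigma^2}$ vanishes like $\sqrt{x-1}$, so that $f\,\nu_{H_\infty,\sigma}$ is of order $(x-1)^{-1/2}$ and integrable. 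Proposition~\ref{pro.temp} then yields $C_s/s\to \sigma^2\,\nu_{H_\infty,\sigma}\big((x^2-1)^{-1}\big)\in(0,+\infty)$ almost surely, whence $C_{+\infty}=+\infty$ and the recurrence of $\Theta_s$ on $\mathbb S^2$ follows.

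The argument is comparatively smooth once Proposition~\ref{pro.temp} is granted; the two steps deserving a little care are the passage from the Ces\`aro convergence of $\frac1s\int_0^s\dot t_u\,du$ to the almost sure finiteness of the exponentially weighted integral $\int_0^{+\infty}\dot t_s e^{-H_\infty s}\,ds$ — handled by the integration by parts above and crucially relying on $H_\infty>0$ — and the verification that $(x^2-1)^{-1}$ is $\nu_{H_\infty,\sigma}$-integrable despite its singularity at the left endpoint, which is precisely compensated by the vanishing of the density there. Everything else reduces to the pseudo-norm relation, the elementary bound $\sqrt{\dot t_s^2-1}\le\dot t_s$, and the recurrence of Brownian motion on a compact Riemannian manifold.
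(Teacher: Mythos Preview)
Your proof is correct, and for the recurrence of $\Theta_s$ it coincides with the paper's argument (the paper simply asserts $\lim_s C_s/s\in(0,+\infty)$ via Proposition~\ref{pro.temp}; your explicit check that $x\mapsto(x^2-1)^{-1}$ is monotone and $\nu_{H_\infty,\sigma}$-integrable is a welcome detail).

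For the convergence of $x_s$, however, you take a noticeably longer route than the paper. After reaching $|\dot x_u|\le \dot t_u/\alpha(t_u)$, the paper simply performs the change of variables $v=t_u$, $dv=\dot t_u\,du$, to get
\[
\int_0^s \frac{\dot t_u}{\alpha(t_u)}\,du \;=\; \int_{t_0}^{t_s}\frac{dv}{\alpha(v)} \;\le\; \int_{t_0}^{+\infty}\frac{dv}{\alpha(v)} \;<\;+\infty,
\]
the last bound being immediate from the exponential growth of $\alpha$. This bypasses your lower bound on $\alpha(t_s)$, the integration by parts, and---more significantly---any appeal to the ergodic statement of Proposition~\ref{pro.temp}: the only input needed is $t_s\to+\infty$, which already follows from $\dot t_s\ge 1$. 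Your approach still works (the estimate $F(s)\le C(1+s)$ and the integration by parts are fine), but it introduces an unnecessary dependence on the ergodic theorem and obscures the fact that the convergence of $x_s$ is really a deterministic consequence of the pseudo-norm relation and the integrability of $1/\alpha$ at infinity.
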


\begin{proof}
From Equation (\ref{eqn.pseudo.eucli}), we have $|\dot{x}_s|=\sqrt{\dot{t}_s^2-1}/\alpha(t_s)$ for all $s \geq 0$. Therefore
\[
|x_s -x_0| \leq \int_0^s |\dot{x}_u |du = \int_0^s \frac{\sqrt{\dot{t}_u^2-1}}{\alpha(t_u)} du \leq \int_0^s \frac{\dot{t}_u}{\alpha(t_u)} du =\int_{t_0}^{t_s} \frac{du}{\alpha(u)}.
\]
The process $t_s$ goes almost surely to infinity with $s$. The expansion function $\alpha$ having exponential growth, the last integral is a.s. convergent, so that the total variation of $x_s$ and the process itself are also convergent, whence the first point in the proposition. According to Remark \ref{rem.brownienspherique}, $\dot{x}_s /|\dot{x}_s|=\Theta_s=\widetilde{\Theta}_{C_s}$ is a time-changed spherical Brownian motion. By Proposition \ref{pro.temp}, we have  $\lim_{s \to +\infty} C_s /s \in (0, +\infty)$, in particular the clock $C_s$ goes almost surely to infinity with $s$ and the process $\Theta_s$ is recurrent in $\mathbb S^2$.
\end{proof}

\subsection{Poisson boundary of the relativistic diffusion} \label{sec.proofpoisson}
\noindent
The proof of Theorem \ref{the.poisson} is divided into three parts. We first prove a Liouville theorem for the temporal subdiffusion (Proposition \ref{pro.poissontemp}), then we prove an analogous result for the spherical subdiffusion (Proposition \ref{pro.poissonspatial}). Finally, we deduce the Poisson boundary of the global relativistic diffusion (Proposition \ref{pro.poisson}). 
\subsubsection{A Liouville theorem for the temporal subdiffusion}
\noindent
The infinitesimal generator of the temporal subdiffusion $(t_s, \dot{t}_s)$, acting on smooth functions from $(0,+\infty) \times [1, +\infty)$ to $\mathbb R$, is given by
$$
 \mathcal L_{H}:= \dot{t} \partial_{t}  - H(t) (\dot{t}^2-1) \partial_{\dot{t}}  + \frac{\sigma^2}{2} (\dot{t}^2-1) \partial_{\dot{t}}^2. 
$$
\begin{prop}
\label{pro.poissontemp}All bounded $\mathcal L_{H}-$harmonic functions are constant.
\end{prop}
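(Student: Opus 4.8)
The plan is to establish the stronger fact that any two copies of the temporal subdiffusion, started from arbitrary points of $(0,+\infty)\times[1,+\infty)$, admit a shift-coupling; triviality of the invariant sigma field, hence the Liouville property, then follows at once. Concretely, let $h$ be a bounded $\mathcal L_H$-harmonic function and fix two starting points. Along each copy $(t_s,\dot t_s)$ the process $h(t_s,\dot t_s)$ is a bounded martingale, so it converges a.s.\ and in $\mathbb L^1$ to a limit $N_\infty$, which is a measurable \emph{shift-invariant} functional of the path (the limit is unchanged after a finite time shift), and $h(t_0,\dot t_0)=\mathbb E[N_\infty]$. Hence it suffices to realise the two copies on one probability space together with a.s.\ finite random times $S,S'$ such that $(t_S,\dot t_S)=(t'_{S'},\dot t'_{S'})$: by the strong Markov property the two time-shifted paths then have the same law, so $N_\infty\stackrel{d}{=}N'_\infty$ and $h(t_0,\dot t_0)=h(t'_0,\dot t'_0)$; the two points being arbitrary, $h$ is constant.

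First I would trade the proper time $s$ for the ``physical'' time $a=t_s$. Since $\dot t_s\ge1$ (Remark \ref{rem.pseudo}) and $t_s\to+\infty$ (Proposition \ref{pro.temp}), the map $s\mapsto t_s$ is a continuous increasing bijection of $[0,+\infty)$ onto $[t_0,+\infty)$; writing $Y_a:=\dot t_{s(a)}$ for its inverse $s(a)$, a time change in Equation (\ref{eqn.ttpoint}) turns the temporal diffusion into the \emph{one}-dimensional, time-inhomogeneous equation
\[
dY_a=\Big(-H(a)\big(Y_a-\tfrac1{Y_a}\big)+\tfrac{3\sigma^2}{2}\Big)\,da+\sigma\sqrt{Y_a-\tfrac1{Y_a}}\;dW_a ,
\]
driven by a standard Brownian motion $W$. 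Three features will be used: (i) on the open interval $(1,+\infty)$, which $Y$ never leaves (remark after Lemma \ref{cor.eucli.cartesien}), the coefficients are locally Lipschitz, so pathwise uniqueness and the one-dimensional comparison theorem hold; (ii) for each fixed $a$ the drift is strictly decreasing in $y$; (iii) the coefficients do not depend on the starting point and $H(a)\downarrow H_\infty>0$, while the reparametrised version of Lemma \ref{LEM.COMPARPROJ} sandwiches $Y$ between the autonomous, positively recurrent diffusions with constant Hubble parameters $H(t_0)$ and $H_\infty$; combined with Proposition \ref{pro.temp} this yields that, for suitable $0<\delta$ and $K<\infty$, the occupation time $\int_0^a\mathbf 1_{[1+\delta,K]}(Y_u)\,du$ tends a.s.\ to $+\infty$.

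Then I would couple. Build the two reparametrised processes $Y,Y'$ on a common space, using independent Brownian motions up to the physical time $a_0:=t_0\vee t'_0$ and then one and the same Brownian motion for $a\ge a_0$; relabelling the copies we may assume $Y_{a_0}\ge Y'_{a_0}$, so by (i)--(ii) the comparison theorem gives $Y_a\ge Y'_a$ for all $a\ge a_0$, and the gap $\rho_a:=Y_a-Y'_a\ge0$ has non-positive drift, hence is a non-negative supermartingale. It therefore converges a.s.; moreover $\rho_a+A_a$, with $A$ the increasing drift part, is a non-negative local martingale, so it converges too, and consequently $\int_{a_0}^{\infty}\big(\sqrt{Y_a-\tfrac1{Y_a}}-\sqrt{Y'_a-\tfrac1{Y'_a}}\big)^2\,da<\infty$ a.s. If $\lim_a\rho_a$ were positive with positive probability, then on that event, at the (infinitely many, by (iii)) times $a$ with $Y_a\in[1+\delta+\lim\rho,\,K]$ both $Y_a$ and $Y'_a$ would sit in a fixed compact subinterval of $(1,+\infty)$ on which $y\mapsto\sqrt{y-1/y}$ has derivative bounded below by a positive constant, forcing that integral to diverge --- a contradiction. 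Hence $\rho_a\to0$ a.s. Finally, whenever $Y$ and $Y'$ are close and both lie in such a non-degenerate compact window --- which occurs for arbitrarily large physical times, by recurrence --- a brief switch to a reflection (or independent) coupling of the increments, where the diffusion coefficient of the gap is bounded away from $0$, makes $Y'$ reach $Y$ within a bounded amount of physical time with probability bounded below; iterating over the successive visits produces an a.s.\ finite meeting time $a_*$, and $S:=s(a_*)$, $S':=s'(a_*)$ give the shift-coupling sought in the first step.

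The main obstacle is precisely this last point: the convergence $\rho_a\to0$ only exploits monotonicity of the drift, but turning it into an \emph{actual} meeting of the two copies requires both the non-degeneracy of the noise away from $\dot t=1$ and the positive recurrence supplied by Proposition \ref{pro.temp}, so it is here that the full strength of the hypotheses on $\alpha$ is used. (One may also bypass exact coalescence altogether: since $\mathcal L_H$ is hypoelliptic, $h$ is smooth, and interior Schauder-type estimates --- uniform because the coefficients are bounded on $\{t\ge t_0\}$ and uniformly subelliptic on $\{\dot t\in[1+\delta,K]\}$ --- give $|\partial_{\dot t}h|\le C$ there; evaluating $h(a,\cdot)$ at $Y_a$ and $Y'_a$ along a sequence $a_k\to\infty$ with $Y_{a_k}\in[1+\delta,K]$ and using $\rho_{a_k}\to0$ then yields $N_\infty=N'_\infty$ directly.)
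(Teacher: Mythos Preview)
Your overall strategy --- reparametrise by the physical time $a=t_s$, build a shift-coupling of two copies of the temporal diffusion, and invoke optional stopping --- is exactly the one the paper follows. The substantial difference lies in \emph{how} the coupling is obtained, and this is also where your argument is incomplete.

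The paper keeps the two copies \emph{independent} (driven by independent Brownian motions $B^1,B^2$) and, after the common reparametrisation $u^i_a:=\dot t^i[(t^i)^{-1}_a]$, applies It\^o's formula to
\[
\tfrac12\log\!\Big(\frac{(u^1_a)^2-1}{(u^2_a)^2-1}\Big)=\text{const}+Q_a+R_a+M_a,
\]
and checks that on $\{u^1<u^2\ \text{forever}\}$ both $Q$ and $R$ are nonnegative while $\langle M\rangle_\infty=+\infty$; this forces the two curves to cross at some finite physical time. Thus exact meeting (in the $a$-clock) is obtained \emph{directly}, and the shift-coupling in the $s$-clock follows with $T_i=(t^i)^{-1}$ of the crossing time. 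A Fubini argument handles the fact that $T_1,T_2$ are stopping times only for each marginal filtration.

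You instead run a synchronous coupling (same $W$ after $a_0$) and use monotonicity of the drift to get $\rho_a\ge0$ a nonnegative supermartingale with $\rho_a\to0$ a.s. That part is fine. The gap is the passage from $\rho_a\to0$ to an actual meeting time. Once you ``briefly switch'' to a reflection or independent coupling, the ordering $Y\ge Y'$ and the supermartingale property of $\rho$ are no longer guaranteed, so after a failed attempt there is no reason the processes are still close; the Borel--Cantelli iteration you sketch therefore lacks a uniform lower bound on the success probability at each trial. This can be repaired, but it requires a careful recursive construction (re-establishing $\rho$ small after each failure), which you have not provided. Your alternative via uniform Schauder bounds is also not justified as stated: the operator $\mathcal L_H$ has no $\partial_t^2$ term and the coefficient $\sigma^2(\dot t^2-1)$ is unbounded, so ``uniform subelliptic estimates on $\{t\ge t_0,\ \dot t\in[1+\delta,K]\}$'' do not follow from standard results without further work.

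In short: the paper's independent-copies argument buys exact crossing in one stroke (at the price of a clever It\^o computation), whereas your synchronous-coupling route gets asymptotic coalescence cheaply but leaves the hardest step --- upgrading to genuine coalescence --- only sketched.
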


\begin{proof}The proof of Proposition \ref{pro.poissontemp} is based on the following fact: there is an automatic shift coupling between two independent solutions of the system (\ref{eqn.ttpoint}). Let $B^1$ and $B^2$ be two independent standard Brownian motions defined on two measured spaces $(\Omega_1, \mathcal F_1)$ and $(\Omega_2, \mathcal F_2)$ as well as the processes $(t_s^1, \dot{t}_s^1)$ and $(t_s^2, \dot{t}_s^2)$, starting from $(t_0^1, \dot{t}_0^1) \neq (t_0^2, \dot{t}_0^2) $ (deterministic) and solution of the following systems, for $ i=1,2$:
\[
 dt_s^i= \dot{t}_s^i ds, \;\; d \dot{t}_s^i= \left[-H(t_s^i) \left(|\dot{t}_s^i|^2-1 \right) + \frac{3 \sigma^2}{2} \dot{t}_s^i \right]ds+ \sigma \sqrt{|\dot{t}_s^i|^2-1} d B^i_s.
 \]
Define $\tau_0:=\max(t_0^1,t_0^2)$. We denote by $\mathbb P_i$ the law of $(t_s^i, \dot{t}_s^i)$ and by $\mathbb P:=\mathbb P_1 \otimes \mathbb P_2$ the law of the couple. 
From Remark \ref{rem.pseudo}, the processes $t^i_s$ are strictly increasing. Denote by $(t^i)^{-1}_s$ their inverse, and define $u^i_s:= \dot{t}^i [ (t^i)^{-1}_s]$. Without loss of generality, one can suppose that $1<u_{\tau_0}^1< u_{\tau_0}^2$. By It\^o's formula, for $s \geq \tau_0$, one has  
\begin{equation}\label{eqn.logv}
\frac{1}{2} \log \left(\frac{|u_s^1|^2-1}{|u_s^2|^2-1} \right) =\frac{1}{2}  \log \left(\frac{|u_{\tau_0}^1|^2-1}{|u_{\tau_0}^2|^2-1} \right)  + Q_s + R_s+M_s,
\end{equation}
where 
\[
\begin{array}{ll}
\displaystyle{Q_s:=\sigma^2 \left[(t^1)^{-1}_s -(t^2)^{-1}_s\right]  - \sigma^2 \left[(t^1)^{-1}_{\tau_0} -(t^2)^{-1}_{\tau_0}\right], } \\
\\
\displaystyle{R_s:=\frac{\sigma^2}{2}\left(\int_{\tau_0}^{s}\frac{u_r^2 \left(|u_r^2|^2-1\right)-u_r^1 \left(|u_r^1|^2-1\right)}{u_r^1 \left(|u_r^1|^2-1\right) \times u_r^2 \left(|u_r^2|^2-1\right)}dr\right),}\\
\end{array}
\]
and where $M_s$ is a martingale whose bracket is given by: 
\begin{equation} \langle M \rangle_s = \sum_{i=1}^2 \int_{(t^i)^{-1}_{\tau_0}}^{(t^i)^{-1}_s}\frac{|\dot{t}_u^i|^2}{|\dot{t}_u^i|^2-1} du \geq (t^1)^{-1}_s- (t^1)^{-1}_{\tau_0}.
\label{eqn.minorcrochet}\end{equation}
Let us show that the coupling time $\tau_c := \inf \{ s > \tau_0, \; u^1_s=u^2_s\}$ is finite $\mathbb P-$almost surely. Consider the set 
$A:=\{ \omega \in \Omega_1 \times \Omega_2, \; \tau_c(\omega)=+\infty\}$. By definition, if $\omega\in A$ one has $u^1_s (\omega)< u^2_s(\omega)$ for $s > \tau_0$. We deduce that $R_s(\omega), Q_s(\omega)>0$ for all $s > \tau_0$. Indeed, for $s > \tau_0$, one has  : 
\[
 \int_{\tau_0}^s \frac{dr}{u_r^1} > \int_{\tau_0}^s \frac{dr}{u_r^2}, \;\; \hbox{and} \;\; \int_{\tau_0}^s \frac{dr}{u_r^i}= \int_{\tau_0}^s \frac{dv}{\dot{t}^i[(t^1)^{-1}_v]}=(t^i)^{-1}_s - (t^i)^{-1}_{\tau_0}.
 \]
On the set $A$, by Equation (\ref{eqn.logv}), the martingale $M_s$ thus admits the upper bound: 
\[
M_s +  \frac{1}{2} \log \left(\frac{|u_{\tau_0}^1|^2-1}{|u_{\tau_0}^2|^2-1} \right) \leq  \frac{1}{2} \log \left(\frac{|u_s^1|^2-1}{|u_s^2|^2-1} \right) \leq 0,
\]
But by Equation (\ref{eqn.minorcrochet}), as $(t^1)^{-1}_s$ goes to infinity with $s$, we have also $\langle M \rangle_{\infty} = +\infty$ $\mathbb P-$almost surely. Therefore $\mathbb P(A)=0$ and $\tau_c < +\infty \; \mathbb P-$almost surely. In other words, $\mathbb P-$a.s. the two sets $(t_.^1, \dot{t}_.^1)_{\mathbb R^+}$ and $(t_.^2, \dot{t}_.^2)_{\mathbb R^+}$ intersect, where $(t_.^i, \dot{t}_.^i)_{\mathbb R^+}$ denotes the set of points of the curves $(t_s^i, \dot{t}_s^i)_{s \geq 0}$, $i=1,2$. Let us define the random times
\[
\displaystyle{T_1:=\inf \{ s>0,  (t_s^1, \dot{t}_s^1) \in (t_.^2, \dot{t}_.^2)_{\mathbb R^+}} \},  \quad
\displaystyle{T_2:=\inf \{ s>0,  (t_s^2, \dot{t}_s^2) \in (t_.^1, \dot{t}_.^1)_{\mathbb R^+}} \}.
\]
These variables are not stopping times for the filtration 
$\sigma( (t_s^i, \dot{t}_s^i), \; i=1,2, \; s \leq u)_{u \geq 0}$, nevertheless they are finite $\mathbb P-$almost surely. As a consequence, we deduce that both sets
$A_1:= \{ \omega_1 \in \Omega_1, \; T_2 < +\infty \; \mathbb P_2-\hbox{a.s.} \}$ and
$A_2:= \{ \omega_2 \in \Omega_2, \; T_1 < +\infty \; \mathbb P_1-\hbox{a.s.} \}$
verify $\mathbb P_1(A_1)=\mathbb P_2(A_2)=1$. 
Moreover, as the processes $t_s^i$ are strictly increasing, one has 
\begin{equation} \label{eqn.couplage}(t_{T_1}^1, \dot{t}_{T_1}^1)= (t_{T_2}^2, \dot{t}_{T_2}^2) \quad \mathbb P-\hbox{almost surely}.\end{equation}
Indeed, by definition of $T_1$ and $T_2$, there exists $u, v \in \mathbb R^+$ (random) such that
$(t_{T_1}^1, \dot{t}_{T_1}^1)= (t_u^2, \dot{t}_u^2)$ and $(t_{T_2}^2, \dot{t}_{T_2}^2)= (t_v^1, \dot{t}_v^1)$. If $t_{T_1}^1 =t_u^2 < t_{T_2}^2$, as $t^2_s$ is strictly increasing, we would have $u < T_2$ and $(t_u^2, \dot{t}_u^2) \in (t_.^1, \dot{t}_.^1)_{\mathbb R^+}$ which would contradict the definition of $T_2$ as an infimum. Therefore, we have $t_{T_1}^1\geq t_{T_2}^2$ and $t_{T_1}^1 = t_{T_2}^2$ by symmetry. Finally, using the monotonicity of $t_s^i$ again, we conclude that $u=T_2$ and $v=T_1$, hence the coupling (\ref{eqn.couplage}). Now let $h$ be a bounded $\mathcal L_H-$harmonic function. Fix $\omega_2 \in \Omega_2$. The map
$\omega_1 \in \Omega_1 \mapsto T_1(\omega_1,\omega_2)$ is a stopping time for the filtration $\sigma( (t_s^1, \dot{t}_s^1), \; s \leq t)_{t \geq 0}$, and it is finite $\mathbb P_1-$almost surely. By the optional stopping theorem, one has
\[
 h(t_0^1, \dot{t}_0^1) = \mathbb E_1 \left[h (t_{T_1}^1, \dot{t}_{T_1}^1)\right] = \int h (t_{T_1}^1, \dot{t}_{T_1}^1)d \mathbb P_1,
 \]
and integrating against $\mathbb P_2$, we get : 
\[
 h(t_0^1, \dot{t}_0^1) =  \int h (t_{T_1}^1, \dot{t}_{T_1}^1)d \mathbb P_1 \otimes d\mathbb P_2 = \int h (t_{T_1}^1, \dot{t}_{T_1}^1)d \mathbb P.
 \]
In the same way, we have  
\[
 h(t_0^2, \dot{t}_0^2) =  \int h (t_{T_2}^2, \dot{t}_{T_2}^2)d \mathbb P_1 \otimes d\mathbb P_2 = \int h (t_{T_2}^2, \dot{t}_{T_2}^2)d \mathbb P.
 \]
By (\ref{eqn.couplage}), we conclude that $h(t_0^1, \dot{t}_0^1) =h(t_0^2, \dot{t}_0^2)$, \emph{i.e.} the function $h$ is constant. 
\end{proof}

\subsubsection{A Liouville theorem for the spherical subdiffusion}
We now extend the above Liouville theorem to the spherical subdiffusion by using a second coupling argument, namely a mirror coupling argument on the sphere. To simplify the expressions in the sequel, we will denote by  $(e_s)_{s \geq 0} := (t_s, \dot{t}_s, \Theta_s)_{s \geq 0}$ the spherical subdiffusion with values in the space $E:=(0, +\infty) \times [1, +\infty) \times \mathbb S^2$ and by $\mathcal L_{E}$ its the infinitesimal generator acting on smooth functions from $E$ to $\mathbb R$.

\begin{prop}\label{pro.poissonspatial}All bounded $\mathcal L_{E}-$harmonic functions are constant.
\end{prop}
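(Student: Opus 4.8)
The plan is to extend the shift-coupling strategy of Proposition \ref{pro.poissontemp} to the spherical subdiffusion $(e_s)=(t_s,\dot t_s,\Theta_s)$ by building a coupling of two copies that agree \emph{both} on the temporal coordinates and on the sphere coordinate after some finite random time. Concretely, I would start two independent copies $(t^i_s,\dot t^i_s,\Theta^i_s)$, $i=1,2$, from distinct deterministic initial points, and first run the temporal coupling of the previous proposition: by Proposition \ref{pro.poissontemp} (more precisely the construction in its proof) the two temporal curves $(t^1_\cdot,\dot t^1_\cdot)_{\mathbb R^+}$ and $(t^2_\cdot,\dot t^2_\cdot)_{\mathbb R^+}$ meet, and at the meeting times $T_1,T_2$ one has $(t^1_{T_1},\dot t^1_{T_1})=(t^2_{T_2},\dot t^2_{T_2})$. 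The key point — this is where Remark \ref{rem.brownienspherique} enters — is that the spherical component $\Theta^i$ is a time-change, via the clock $C^i_s=\sigma^2\int_0^s du/((\dot t^i_u)^2-1)$, of a spherical Brownian motion $\widetilde\Theta^i$ that is \emph{independent} of the temporal subdiffusion. Since the temporal curves coincide after $T_1,T_2$ up to the time reparametrisation matching $T_1$ with $T_2$, the clocks $C^1$ and $C^2$ run through the \emph{same} remaining range of values; so after the temporal coupling we are reduced to coupling two spherical Brownian motions $\widetilde\Theta^1,\widetilde\Theta^2$ on $\mathbb S^2$ started from (generically) distinct points $\widetilde\Theta^1(C^1_{T_1})\neq\widetilde\Theta^2(C^2_{T_2})$.

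Next I would invoke the classical mirror (reflection) coupling of Brownian motion on the sphere: two spherical Brownian motions can be coupled so that their coupling time is almost surely finite, by reflecting the driving noise of the second across the hyperplane bisecting the geodesic segment joining the two positions, and running it until the first hitting of that bisector — which is finite a.s. because the angular distance is a bounded diffusion on $[0,\pi]$ that is absorbed at $0$ a.s. The subtle bookkeeping is that this reflection coupling of the $\widetilde\Theta$'s must be performed \emph{after} the temporal coupling, using the independence of the $\widetilde\Theta^i$'s from the $(t^i,\dot t^i)$'s: conditionally on the whole temporal picture (hence on $T_1,T_2$ and the ranges of the clocks), the $\widetilde\Theta^i$'s are still two independent spherical Brownian motions, and we are free to re-couple the second one's future increments after its clock passes $C^2_{T_2}$. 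Concatenating the two couplings produces, on a suitable product space, a finite random time after which both copies of $(t_s,\dot t_s,\Theta_s)$ visit a common value; as in Proposition \ref{pro.poissontemp} one then sets $T_1$ (resp. $T_2$) to be the first time the first (resp. second) curve hits the trace of the other, shows $T_1,T_2<\infty$ $\mathbb P$-a.s. and that $e^1_{T_1}=e^2_{T_2}$, applies the optional stopping theorem to a bounded $\mathcal L_E$-harmonic function $h$ along each copy, integrates out the other coordinate, and concludes $h(e^1_0)=h(e^2_0)$, i.e. $h$ is constant.

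The main obstacle I anticipate is precisely the synchronisation of the two clocks $C^1_s$ and $C^2_s$ at the temporal meeting: the temporal coupling of Proposition \ref{pro.poissontemp} matches the two temporal curves only up to a time reparametrisation ($T_1$ on copy $1$ corresponds to $T_2$ on copy $2$), and one must check carefully that along the shifted/reparametrised copies the \emph{remaining} clock increments $\int_{T_i}^{\infty} \sigma^2\,du/((\dot t^i_u)^2-1)$ are identical and both infinite (the latter follows from Proposition \ref{pro.temp}, since $\lim_s C_s/s\in(0,\infty)$, so each clock runs to $+\infty$), allowing the sphere-valued Brownian motions to actually be coupled within the available clock time. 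A clean way to organise this is to pass to the space-parametrisation $u^i_s:=\dot t^i[(t^i)^{-1}_s]$ already used in the proof of Proposition \ref{pro.poissontemp}, rewrite the spherical process in that parametrisation, observe that after the temporal curves merge the two space-parametrised temporal processes coincide identically, and then the two spherical processes are driven by the \emph{same} clock and differ only through their (independent) driving Brownian noises on $\mathbb S^2$, at which point the mirror coupling applies directly. The rest is the now-routine optional-stopping argument, identical in structure to the end of the proof of Proposition \ref{pro.poissontemp}.
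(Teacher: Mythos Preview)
Your proposal is correct and follows essentially the same approach as the paper: first shift-couple the temporal subdiffusions as in Proposition \ref{pro.poissontemp}, then use the independence of the spherical Brownian motion $\widetilde\Theta$ from the temporal part (Remark \ref{rem.brownienspherique}) together with a mirror-reflection coupling on $\mathbb S^2$ to synchronise the spherical components, and conclude by optional stopping. The paper's write-up is slightly more direct in that it explicitly constructs the modified process ${e'}^2_s$ by reflecting $(\Theta^1_s)_{s\ge T_1}$ across the median plane between $\Theta^1_{T_1}$ and $\Theta^2_{T_2}$ and takes the meeting time to be $T_1+T^*$ (resp.\ $T_2+T^*$) where $T^*$ is the first hitting of the median great circle, rather than re-defining first-hitting times of traces, but the substance is identical.
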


\begin{proof}
Fix $e^1_0= (t_0^1, \dot{t}_0^1, \Theta_0^1)\neq e^2_0=(t_0^2, \dot{t}_0^2, \Theta_0^2)$ in $E$. As in the proof of Proposition \ref{pro.poissontemp}, consider two independent solutions $(t_s^1, \dot{t}_s^1)$ and $(t_s^2, \dot{t}_s^2)$ of Equation (\ref{eqn.ttpoint}), starting from $(t_0^1, \dot{t}_0^1)$ and $(t_0^2, \dot{t}_0^2)$ respectively, which coincide after the shift-coupling times $T_1$ and $T_2$: 
$(t_{T_1+s}^1, \dot{t}_{T_1+s}^1)=(t_{T_2+s}^2, \dot{t}_{T_2+s}^2)$, for $s \geq 0$. 
Let us consider two independent spherical Brownian motions $\widetilde{\Theta}^i$ on $\mathbb S^2$, $i=1,2$, which are independent of the two above temporal diffusions and define for $s \geq 0$ and $i=1, 2$: 
\[
C^i_s:=\int_{0}^{s} \frac{du}{|\dot{t}_u^i|^2-1}, \quad \Theta_s^i := \widetilde{\Theta}^i \left( C^i_s \right).
\]
By Remark \ref{rem.brownienspherique}, the two diffusions $e^i_s:=(t_s^i, \dot{t}_s^i, \Theta_s^i), \,i=1,2$ are solutions of the stochastic differential equations (\ref{eqn.ttpoint}--\ref{eqn.crochet}), let us denote by $\mathbb P_i$ their law, define $\mathbb P:= \mathbb P_1 \otimes \mathbb P_2$ and denote by $\mathbb E$ the associated expectation. Define a new process $({\Theta'}_{s}^2)_{s \geq 0}$, such that ${\Theta'}_{s}^2$ coincides with ${\Theta}_{s}^2$ on the time interval $[0, T_2]$ and such that the future trajectory $({\Theta'}_{s}^2)_{s \geq T_2}$ is the reflection of $({\Theta}_{s}^1)_{s \geq T_1}$ with respect to the median plan between the points $\Theta_{T^1}^1$ and $\Theta_{T^2}^2$, see figure \ref{fig.couplage} below. 

\begin{figure}[ht]
\hspace{5cm}\scalebox{0.8}{\begin{picture}(0,0)%
\includegraphics{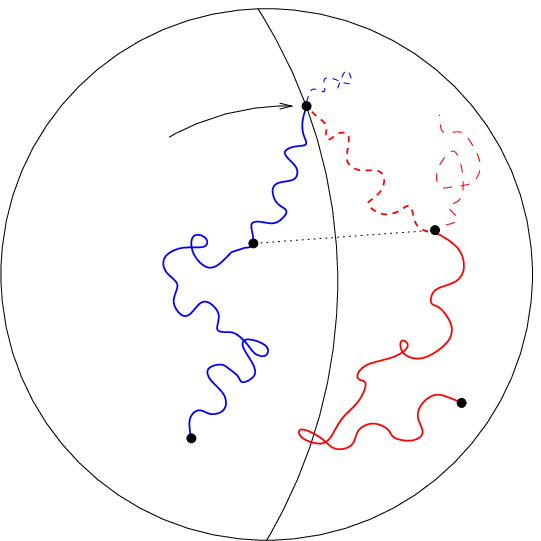}%
\end{picture}}%
\setlength{\unitlength}{1865sp}%
\begingroup\makeatletter\ifx\SetFigFontNFSS\undefined%
\gdef\SetFigFontNFSS#1#2#3#4#5{%
  \reset@font\fontsize{#1}{#2pt}%
  \fontfamily{#3}\fontseries{#4}\fontshape{#5}%
  \selectfont}%
\fi\endgroup%
\scalebox{0.8}{\begin{picture}(5416,5416)(3773,-6819)
\put(5626,-6136){\makebox(0,0)[lb]{\smash{{\SetFigFontNFSS{8}{9.6}{\familydefault}{\mddefault}{\updefault}{\color[rgb]{0,0,0}$\Theta_0^1$}%
}}}}
\put(8191,-5731){\makebox(0,0)[lb]{\smash{{\SetFigFontNFSS{8}{9.6}{\familydefault}{\mddefault}{\updefault}{\color[rgb]{0,0,0}$\Theta_0^2$}%
}}}}
\put(7876,-4021){\makebox(0,0)[lb]{\smash{{\SetFigFontNFSS{8}{9.6}{\familydefault}{\mddefault}{\updefault}{\color[rgb]{0,0,0}$\Theta_{T_2}^2$}%
}}}}
\put(6256,-4156){\makebox(0,0)[lb]{\smash{{\SetFigFontNFSS{8}{9.6}{\familydefault}{\mddefault}{\updefault}{\color[rgb]{0,0,0}$\Theta_{T_1}^1$}%
}}}}
\put(4231,-3031){\makebox(0,0)[lb]{\smash{{\SetFigFontNFSS{8}{9.6}{\familydefault}{\mddefault}{\updefault}{\color[rgb]{0,0,0}coupling point}%
}}}}
\end{picture}}%
\caption{Mirror coupling of two independent spherical sub-diffusions.}\label{fig.couplage}
\end{figure}
\if{\begin{figure}[htbp]
  \begin{center}
    \includegraphics[scale=0.4]{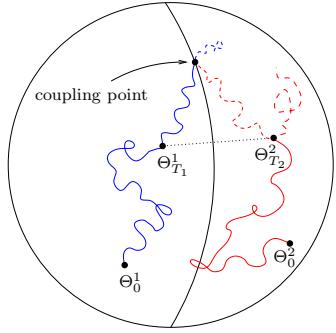}
    \caption{Mirror coupling of two independent spherical subdiffusions.}
    \label{fig.couplage}
  \end{center}
\end{figure}
}\fi
The new process ${e'}^2_s:=(t_s^2, \dot{t}_s^2, {\Theta'}_s^2)$ is again a solution of Equations (\ref{eqn.ttpoint}--\ref{eqn.crochet}). Moreover, the first time $T^*$ when the process $({\Theta}_{s}^1)_{s \geq T_1}$ intersects the median big circle between $\Theta_{T^1}^1$ and $\Theta_{T^2}^2$ is finite $\mathbb P-$almost surely, and one has naturally ${e'}^2_{T_2+T^*}={e}_{T_1+T^*}^1$  $\mathbb P-$almost surely. Now consider $h$ a bounded $\mathcal L_E-$harmonic function, thanks to the above coupling and the optionnal stopping theorem, as in the proof of Prop. \ref{pro.poissontemp}, we have $\mathbb P-$almost surely $h(e_0^2)=h(e_0^1)$ because 
\[
\mathbb E [  h({e'}^2_{T_2+T^*}) ] =\mathbb E [ h({e}_{T_1+T^*}^1) ].
\]
Therefore, the function $h$ is constant, hence the result.
\end{proof}

\subsubsection{Poisson boundary of the global relativistic diffusion}
\noindent
In order to describe the Poisson boundary of the whole relativistic diffusion $(\xi_s, \dot{\xi}_s)_{s \geq 0}$  starting from the one of the spherical subdiffusion, we need a few preliminaries. First notice that, thanks to the pseudo-norm relation (\ref{eqn.pseudo.eucli}), the invariant sigma field of the whole diffusion  $(\xi_s, \dot{\xi}_s)_{s \geq 0}$ with values in $T^1_+ \mathcal M$ coincides almost surely with the one of the diffusion process $(e_s, x_s)_{s \geq 0}=((t_s, \dot{t}_s, \Theta_s), x_s)_{s \geq 0}$ with values in $E \times \mathbb R^3$ and whose infinitesimal generator $\mathcal G$ is hypoelliptic and reads: 
\begin{equation}\label{gene} \mathcal G:=  \mathcal L_E+F(e) \, \partial_x, \;\; \hbox{where}\;\;  F(e)=F(t,\dot{t}, \Theta):=\Theta \times \frac{\sqrt{\dot{t}^2-1}}{\alpha(t)}.\end{equation}
Without loss of generality, we can suppose that the process $(e_s, x_s)_{s \geq 0}$ is defined on the canonical probability space $(\Omega, \mathcal F)$ where $\Omega:= C(\mathbb R^+,  E \times \mathbb R^3)$ and $\mathcal F$ is the standard Borel sigma field. A generic $\omega \in \Omega$ writes $\omega=(\omega^1, \omega^{2})$ where $\omega^1=(\omega^1_s)_{s \geq 0} \in C(\mathbb R^+,E)$ and $\omega^{2}=(\omega^2_s)_{s \geq 0} \in C(\mathbb R^+,\mathbb R^3)$. Without loss of generality again, we can suppose that $(e_s, x_s)_{s \geq 0}$ is the coordinate process, namely $(e_s, x_s) = (\omega^1_s, \omega^2_s)$ for all $s \geq 0$. Given $(e,x)$ in $E \times \mathbb R^3$, we will denote by $\mathbb P_{(e,x)}$ the law of the process $(e_s, x_s)_{s \geq 0}$ starting from $(e,x)$, and by $\mathbb E_{(e,x)}$ the associated expectation. 
Let us finally introduce the classical shift operators $(\theta_u)_{u \geq 0}$ acting on $\Omega$ and such that $\theta_u \omega = (\omega_{s+u})_{s \geq 0}$ for all $u \geq 0$. Recall that the tail sigma field $\mathcal F^{\infty}$ of the diffusion process $(e_s,x_s)_{s \geq 0}$ is defined as the intersection
\[
 \mathcal F^{\infty}:=\bigcap_{s>0} \sigma( (e_u,x_u), u>s),
 \]
and that the invariant sigma field $\textrm{Inv}((e_s,x_s)_{s \geq 0})$ of $(e_s,x_s)_{s \geq 0}$ is the subsigma field of $\mathcal F^{\infty}$ composed of shift invariant events, i.e. events $A$ such that $\theta^{-1}_u A=A$ for all $u \geq 0$. 
In this setting, Theorem \ref{the.poisson} is equivalent to the following proposition:

\begin{prop}\label{pro.poisson}
Let $h$ be a bounded $\mathcal G-$harmonic function on $E \times \mathbb R^3$. Then, there exists a bounded mesurable function $\psi$ on $\mathbb R^3$ such that:
\[
h(e, x) = \mathbb E_{(e,x)} [\psi(x_{\infty})], \;\; \forall (e,x) \in E \times \mathbb R^3.
\]
Equivalently, $(e_0,x_0)$ being fixed, the invariant sigma field $\textrm{Inv}((e_s,x_s)_{s \geq 0})$ of the diffusion $(e_s,x_s)_{s \geq 0}$ starting from $(e_0,x_0)$ coincides with $\sigma(x_{\infty})$ up to $\mathbb P_{(e_0,x_0)}-$negligeable sets.
\end{prop}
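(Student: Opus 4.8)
The plan is to prove the equivalent statement that $\textrm{Inv}((e_s,x_s)_{s\ge0})$ coincides with $\sigma(x_\infty)$ modulo $\mathbb P_{(e_0,x_0)}$-negligible sets. One inclusion is immediate: by Proposition \ref{pro.spatial}, $x_\infty=\lim_s x_s$ is a well-defined, tail-measurable and $\theta_u$-invariant functional of the trajectory, so $\sigma(x_\infty)\subseteq\textrm{Inv}((e_s,x_s)_{s\ge0})$. For the converse I would first reduce to harmonic functions: by the bounded martingale convergence theorem, for every bounded $\mathcal G$-harmonic $h$ the martingale $h(e_s,x_s)$ converges $\mathbb P_{(e,x)}$-a.s. and in $L^1$ to a variable $h_\infty$ with $h(e,x)=\mathbb E_{(e,x)}[h_\infty]$, and $\textrm{Inv}((e_s,x_s)_{s\ge0})$ is generated, up to negligible sets, by the family of all such $h_\infty$. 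So it suffices to prove that each $h_\infty$ is $\sigma(x_\infty)$-measurable; the integral formula then follows by setting $\psi(y):=\mathbb E_{(e_0,x_0)}[h_\infty\mid x_\infty=y]$, and the statement on $\textrm{Inv}$ by applying it to $h(e,x)=\mathbb P_{(e,x)}(A)$ with $A\in\textrm{Inv}$.

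Two features of $\mathcal G=\mathcal L_E+F(e)\,\partial_x$ (with $F$ as in (\ref{gene})) are used throughout. First, the pseudo-norm relation (\ref{eqn.pseudo.eucli}) makes $x$ an explicit additive functional of the spherical subdiffusion, $x_s=x_0+\int_0^s F(e_u)\,du$, so the whole trajectory is a deterministic function of $(e_\bullet,x_0)$ and $x_\infty=x_0+\int_0^\infty F(e_u)\,du$ is a functional of the $e$-path alone. Second, $\mathcal G$ is equivariant under the Euclidean translations $\tau_a\colon(e,x)\mapsto(e,x+a)$, so $h\circ\tau_a$ is again bounded harmonic for every $a\in\mathbb R^3$; moreover, hypoellipticity of $\mathcal G$ makes every bounded harmonic function smooth and ensures that the law of $x_\infty$ under $\mathbb P_{(e,x)}$ has a (smooth, everywhere positive) density.

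The heart of the proof is an abstract conditioning on $x_\infty$. Fixing $y$ in the support, the conditioned law $\mathbb P^y_{(e,x)}:=\mathbb P_{(e,x)}(\,\cdot\mid x_\infty=y)$ is legitimate by the density statement above, and since under $\mathbb P^y_{(e,x)}$ one has $\int_s^\infty F(e_u)\,du=y-x_s$ for all $s$, the terminal value is pinned by the present state, so $(e_s,x_s)_{s\ge0}$ remains a time-homogeneous Markov process under $\mathbb P^y_{(e,x)}$, a Doob-type $h$-transform of $\mathcal G$. I would then show this conditioned process has a trivial Poisson boundary by a coupling: run two conditioned copies $(e^1,x^1)$ and $(e^2,x^2)$ from arbitrary starting points but with the \emph{same} target $y$, and bring the $e$-parts together by the successive shift-couplings of Propositions \ref{pro.poissontemp} and \ref{pro.poissonspatial} (temporal shift-coupling, then mirror coupling on $\mathbb S^2$), so that $e^1_{T_1+s}=e^2_{T_2+s}$ for $s\ge0$. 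The Euclidean equivariance then forces the $x$-parts to agree as well: the discrepancy $(x_0^1+\int_0^{T_1}F(e^1_u)\,du)-(x_0^2+\int_0^{T_2}F(e^2_u)\,du)$ is constant in $s$ after the coupling, while both copies satisfy $x^i_{T_i+s}\to y$, hence it vanishes and $(e^1_{T_1+s},x^1_{T_1+s})=(e^2_{T_2+s},x^2_{T_2+s})$ for all $s\ge0$. Applying this together with the optional stopping theorem (as in the proofs of Propositions \ref{pro.poissontemp}--\ref{pro.poissonspatial}) to the bounded harmonic functions of the conditioned process shows each of them is constant; in particular the harmonic function $\mathbb E^y_{(\cdot)}[h_\infty]$ is constant, whence $h_\infty$ is $\mathbb P^y_{(e,x)}$-a.s. equal to a constant $\psi(y)$ independent of the starting point. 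Undoing the conditioning gives $h_\infty=\psi(x_\infty)$ $\mathbb P_{(e,x)}$-a.s. and $h(e,x)=\mathbb E_{(e,x)}[\psi(x_\infty)]$, as claimed.

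The step I expect to be the main obstacle is making the shift-couplings of Propositions \ref{pro.poissontemp}--\ref{pro.poissonspatial} interact correctly with the conditioning: the event $\{x_\infty=y\}$ couples the temporal coordinates $(t,\dot t)$ and the spherical coordinate $\Theta$, so $(t_s,\dot t_s)$ is no longer literally a subdiffusion of the conditioned process and the temporal-then-spherical coupling must be reorganized — a natural device is to condition in two stages, first on the whole temporal path $(t_s,\dot t_s)_{s\ge0}$ (relative to which $\Theta$ is an independent time-changed spherical Brownian motion, Remark \ref{rem.brownienspherique}), thereby reducing matters to a conditioned diffusion on $\mathbb S^2\times\mathbb R^3$, and only then condition on $x_\infty$. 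This is the genuinely new ingredient, and it is precisely where the Euclidean equivariance is indispensable: the cancellation of the pre-coupling $x$-discrepancy would fail for a general warped Riemannian fiber. Hypoellipticity enters at two further points, to guarantee the regularity of the conditioned objects (density of $x_\infty$, so that the Doob transform makes sense) and the smoothness of harmonic functions needed to justify the optional stopping arguments.
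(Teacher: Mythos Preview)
Your strategy diverges from the paper's, and the point where you yourself flag the main obstacle is precisely where the paper takes a different---and cleaner---route.

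You propose to condition on $\{x_\infty=y\}$ via a Doob $h$-transform and then re-run the shift-couplings of Propositions~\ref{pro.poissontemp}--\ref{pro.poissonspatial} for the conditioned process. As you correctly note, this conditioning destroys the subdiffusion structure: the generator of the conditioned process acquires an extra drift involving $\nabla\log p(\cdot,y)$, which mixes the $(t,\dot t)$, $\Theta$, and $x$ variables, so neither the temporal comparison argument nor the spherical mirror coupling survives as stated. Your two-stage workaround (condition first on the full temporal path, then on $x_\infty$) does not resolve this: once the temporal path is frozen, conditioning the time-changed spherical Brownian motion on the value of a weighted integral $\int_0^\infty \Theta_u\,g(u)\,du$ again changes its law in a way that the mirror reflection no longer respects, and the two copies you wish to couple are conditioned on \emph{different} events (different $x_0$, different $\Theta_0$, hence different residuals $y-x_0$). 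So the coupling step, as written, is a genuine gap rather than a technicality.

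The paper avoids this entirely. Instead of conditioning the measure, it performs a \emph{pathwise} recentring: given the invariant functional $Z$ representing $h$, it sets
\[
Z^y(\omega):=Z\bigl(\omega^1,\ \omega^2-x_\infty(\omega)+y\bigr),
\]
which is again shift-invariant. The crucial observation is that this map leaves the $E$-component $\omega^1$ untouched, so the subdiffusion structure is preserved exactly. Translation equivariance (your point~2) then makes $(e,x)\mapsto\mathbb E_{(e,x)}[Z^y]$ independent of $x$, hence a bounded $\mathcal L_E$-harmonic function of $e$ alone, and Proposition~\ref{pro.poissonspatial} gives constancy directly---no new coupling is needed. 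A mollification in $y$ and the substitution $\mathbf{x}=x_\infty(\omega)$, followed by a passage to the limit justified by hypoellipticity (continuity of harmonic functions), recovers $h(e,x)=\mathbb E_{(e,x)}[\psi(x_\infty)]$.

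In short: your use of equivariance and of the additive-functional representation of $x_s$ is right, and your observation that matching $e$-parts forces matching $x$-parts when both converge to the same $y$ is correct in spirit; but the paper's recentring trick replaces the problematic ``couple the conditioned processes'' step by a direct reduction to the already-established triviality on $E$, and thereby also dispenses with the auxiliary claim that $x_\infty$ has a smooth positive density.
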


\begin{proof}
From the second point of Theorem \ref{the.asymp}, for all $(e,x) \in E \times \mathbb R^3$, the process $(x_s)_{s \geq 0}$ converges $\mathbb P_{(e,x)}-$almost surely to a random point $x_{\infty} =x_{\infty}(\omega) \in \mathbb R^3$. With a slight abuse of notation, let us still denote by $x_{\infty}$  the random variable which coincides with $x_{\infty}$ on the subset of $\Omega$ where the convergence occurs and which vanishes elsewhere.
Thanks to the particular form (\ref{gene}) of the infinitesimal generator $\mathcal G$, let us remark the following facts:
\begin{enumerate}
\item for all starting points $(e,x) \in E \times  \mathbb R^3$, the law of the process $(e_s, x+ x_s)_{s \geq 0 }$ under $\mathbb P_{(e,0)}$ coincide with the law of $(e_s, x_s)_{s \geq 0 }$ under $\mathbb P_{(e,x)}$, in particular the law of the limit $x_{\infty}$ under $\mathbb P_{(e,x)}$ is the law of $x + x_{\infty}$ under $\mathbb P_{(e,0)}$;
\item the push-forward measures of both measures $\mathbb P_{(e, 0)}$ and $\mathbb P_{(e,x)}$ under the following mesurable map 
$\omega=(\omega^1, \omega^2) \mapsto (\omega^1,  \omega^2-x_{\infty}(\omega))$ coincide.
\end{enumerate}
\noindent
Let $h$ be a bounded $\mathcal G-$harmonic function on $E \times \mathbb R^3$. From the classical duality between harmonic functions and invariant events, there exists a bounded  variable map $Z : \Omega \to \mathbb R$, such that $Z$ is $\mathcal F^{\infty}-$measurable and satisfies $Z( \theta_u \omega) = Z(\omega)$ for all $\omega \in \Omega$, and such that  
\[
h(e,x) = \mathbb{E}_{(e,x)} [ Z ], \quad \hbox{for all} \;\; (e,x) \in E \times \mathbb R^3.\] 
Moreover, $(e, x) \in E \times \mathbb R^3$ being fixed, for $\mathbb P_{(e,x)}-$almost all paths $\omega$, we have: 
\[
Z(\omega)= \lim_{s \to +\infty} h(e_s(\omega), x_s(\omega)).
\]
For $y \in \mathbb R^3$, consider the new random variable 
\[
Z^y(\omega)  := Z(( \omega^1, \omega^2-x_{\infty}(\omega) + y) ).
\]
The variable $Z^y $ is again $\textrm{Inv}((e_s,x_s)_{s \geq 0})-$measurable. Indeed, since the constant function equal to $y$ and the random variable $Z$ are shift-invariant, for all $u \geq 0$ we have 
\[
 Z( (\omega_{.+u}^1, \omega^2_{.+u}-x_{\infty}(\omega_{.+u}) + y) )= Z (\theta_u ( \omega^1, \omega^2-x_{\infty}(\omega) + y ))= Z(  ( \omega^1, \omega^2-x_{\infty}(\omega) + y )).
\]
Since $Z^y$ is bounded and $\textrm{Inv}((e_s,x_s)_{s \geq 0})-$measurable, the function $(e,x) \mapsto \mathbb E_{(e,x)} [Z^{y}]$ is also a bounded $\mathcal G-$harmonic function. But from the point 2 of the beginning of the proof, for all starting points $(e,x, x') \in E \times \mathbb R^3 \times \mathbb R^3$, we have 
$\mathbb{E}_{(e,x)} [ Z^{y}  ]= \mathbb{E}_{(e,x')} [ Z^{y} ]$.
In other words, the harmonic function $(e, x) \mapsto \mathbb{E}_{(e,x)} [ Z^{y}  ]$ is constant in $x$ and its restriction to $E$ is $\mathcal L_E-$harmonic. From Proposition \ref{pro.poissonspatial},  we deduce that the function $(e, x) \mapsto \mathbb{E}_{(e,x)} [ Z^{y}  ]$ is constant. In the sequel, we will denote by $\psi(y)$ the value of this constant. Note that $y \mapsto \psi(y)$ is a bounded measurable function since $y \mapsto Z^y$ is.
Let us now introduce an approximate unity $(\rho_n)_{n \geq 0}$ on $\mathbb R^3$, fix $\mathbf{x} \in \mathbb R^3$, $n \in \mathbb N$ and consider the ``conditionned and regularized'' version $Z$, namely:
$$
Z^{ \mathbf{x},n}(\omega):= \int_{\mathbb R^3} Z^y(\omega) \rho_n( \mathbf{x}-y)dy. 
$$
The same reasoning as above shows that $Z^{ \mathbf{x},n}$ is a bounded $\textrm{Inv}((e_s,x_s)_{s \geq 0})-$measurable variable so that the function  $(e, x) \mapsto \mathbb{E}_{(e,x)} [ Z^{\mathbf{x},n}  ] $ is constant. Hence, for all $\mathbf{x} \in \mathbb R^3$, $n \in \mathbb N$ and $(e,x) \in E \times \mathbb R^3$, there exists a set  $\Omega^{\mathbf{x},n,(e,x)} \subset \Omega$ such that $\mathbb P_{(e,x)}(\Omega^{\mathbf{x},n,(e,x)} ) =1$ and such that for all paths $\omega$ in $\Omega^{\mathbf{x},n,(e,x)}$, we have:
\[
Z^{\mathbf{x},n}  (\omega) = \lim_{s \to \infty}  \mathbb{E}_{(e_s(\omega),x_s(\omega))} [ Z^{\mathbf{x},n}   ] 
= \mathbb{E}_{(e_0(\omega),x_0(\omega))} [ Z^{\mathbf{x},n}   ] 
= \mathbb{E}_{(e,x)} [Z^{\mathbf{x},n}   ].
\]
Let $D$ be a countable dense set in $\mathbb R^3$ and consider the intersection
 \[
 \Omega^{(e,x)} := \underset{\mathbf{x} \in D, n \in \mathbb N}{\bigcap } \Omega^{\mathbf{x},n, (e,x)}.
 \]
We have naturally $\mathbb P_{(e,x)} ( \Omega^{(e,x)} )=1$ and for all $\omega \in \Omega^{(e,x)}$,  $\mathbf{x} \in D$, $ n \in \mathbb N$, we have 
\[
Z^{\mathbf{x},n} (\omega) = \mathbb{E}_{(e,x)} [ Z^{\mathbf{x},n} ].
\]
Since the above expressions are continuous in $\mathbf{x}$, we deduce that the last inequality is true for all $\mathbf{x} \in \mathbb R^3$. In other words, we have shown that for all $\mathbf{x} \in \mathbb R^3$ and $\omega $ in $\Omega^{(e,x)}$:
\[
 Z^{\mathbf{x},n} (\omega)  =\mathbb{E}_{(e,x)} [ Z^{\mathbf{x},n} ]= \int_{\mathbb R^3} \psi(y ) \rho_n(\mathbf{x} -y)dy.
\]
In particular, taking $\mathbf{x}= x_\infty (\omega)$, we obtain that for all $\omega \in \Omega^{(e,x)}$ and for all $n \in \mathbb N$:
\[
Z^{x_{\infty}(\omega),n} (\omega)  = \displaystyle{\int_{\mathbb R^3} Z((\omega^1, \omega^2+y) ) \rho_n(-y)dy }= \displaystyle{\int_{\mathbb R^3} \psi(y+ x_{\infty}(\omega)) \rho_n(-y)dy}.
\]
Taking the integral in $\omega$ with respect to $\mathbb P_{(e,x)}$ on $\Omega^{(e,x)}$, we deduce that for all $ n \in \mathbb N$:
\[
 \displaystyle{\mathbb E_{(e,x)} \left[ Z^{x_{\infty},n} \right] } =   \displaystyle{\int_{\mathbb R^3}  \mathbb{E}_{(e,x)} [ \psi(y+x_{\infty}) ] \rho_n(-y)dy },
 \]
which, from the first point at the beginning of the proof yields
\[
\displaystyle{\int_{\mathbb R^3}  h(e, x+y ) \rho_n(-y)dy } =  \displaystyle{\int_{\mathbb R^3} \mathbb{E}_{(e,x+y)} [ \psi(x_\infty )  ] \rho_n(-y)dy.}
\]
To conclude, recall that the infinitesimal generator of the diffusion is hypoelliptic so that $\mathcal G-$harmonic functions are continuous, hence we can let $n$ go to infinity in the above expressions to get the desired result, namely  $h(e,x) = \mathbb{E}_{(e,x)} [\psi(x_\infty)]$.
\end{proof}

\begin{rmk}
As already noticed at the end of the introduction, the proof of the last proposition is the starting point of the very recent work \cite{AT1} in collaboration with C. Tardif, where our main motivation is to exhibit a general setting and some natural conditions that allow to compute the Poisson boundary of a diffusion starting from the Poisson boundary of a subdiffusion of the original one. Indeed, the main ingredients of the proof above are that the infinitesimal generator $\mathcal G$ acting on $E \times \mathbb R^3$ is equivariant under the action of Euclidean translations and that it is hypoelliptic so that harmonic functions are continuous. The devissage method introduced in \cite{AT1} actually shows that, under similar equivariance and regularity conditions, the scheme of the proof of Proposition \ref{pro.poisson} can be generalized to an abstract setting where $E$ is replaced by any differentiable manifold and $\mathbb R^3$ is replaced by a finite dimensional Lie group or a co-compact homogeneous space.
\end{rmk}

%
\newpage
\bibliographystyle{alpha}

\end{document}